\documentclass[10pt,a4paper]{article}

\usepackage{a4wide}
\usepackage{amsthm}
\usepackage{amsfonts}
\usepackage{amsmath}
\usepackage[english]{babel}

\usepackage{faktor}
\usepackage{etex}
\usepackage{hyperref}
\usepackage{multimedia}
\usepackage[all]{xy}
\usepackage{amssymb}
\usepackage{graphicx,textpos}
\usepackage[dvipsnames]{xcolor}
\usepackage{helvet}
\usepackage{stmaryrd}
\usepackage{enumerate}
\usepackage{todonotes}
\usepackage{pdfsync}
\usepackage{dsfont}
\usepackage[shortcuts]{extdash}
\usepackage[all]{xy}
  \newdir{ >}{{}*!/-9pt/@{>}}

\newcommand{\R}{\mathbb{R}}

\newcommand{\C}{\mathbb{C}}

\newcommand{\I}{\mathds{1}}

\newcommand{\rst}[1]{\ensuremath{{\mathbin\mid}\raise-.5ex\hbox{$#1$}}}
\newcommand{\lie}{\mathfrak{g}}
\newcommand{\lien}{\mathfrak{n}}
\newcommand{\lieh}{\mathfrak{h}}

\author{Jonas Der\'e\thanks{The author was supported by a postdoctoral fellowship of the Research Foundation -- Flanders (FWO).}}
\title{\textbf{Orthogonal bi-invariant complex structures \\ on metric Lie algebras}}
\date{\vspace{-1cm}}

\newtheorem{Def}{Definition}[section]
\newtheorem{Ex}[Def]{Example}
\newtheorem{Cor}[Def]{Corollary}
\newtheorem{Thm}[Def]{Theorem}
\newtheorem{Prop}[Def]{Proposition}
\newtheorem{Lem}[Def]{Lemma}

\newtheorem*{Prop*}{Proposition}
\newtheorem*{Lem*}{Lemma}

\newtheorem{QN}{Question}

\hyphenation{ho-lo-no-my}

\makeatletter
\newtheorem*{rep@theorem}{\rep@title}
\newcommand{\newreptheorem}[2]{%
	\newenvironment{rep#1}[1]{%
		\def\rep@title{#2 \ref{##1}}%
		\begin{rep@theorem}}%
		{\end{rep@theorem}}}
\makeatother

\newreptheorem{Thm}{Theorem}
\newreptheorem{Cor}{Corollary}

\newcommand{\suchthat}{\;\ifnum\currentgrouptype=16 \middle\fi|\;}

\newcommand{\compcent}[1]{\vcenter{\hbox{$#1\circ$}}}

\newcommand{\comp}{\mathbin{\mathchoice
		{\compcent\scriptstyle}{\compcent\scriptstyle}
		{\compcent\scriptscriptstyle}{\compcent\scriptscriptstyle}}}
	
\newcommand\restr[2]{{
			\left.\kern-\nulldelimiterspace 
			#1 
			\vphantom{\big|} 
			\right|_{#2} 
}}

\begin{document}

\maketitle

\begin{abstract}
This paper studies how many orthogonal bi-invariant complex structures exist on a metric Lie algebra over the real numbers. Recently, it was shown that irreducible Lie algebras which are additionally $2$-step nilpotent admit at most one orthogonal bi-invariant complex structure up to sign. The main result generalizes this statement to metric Lie algebras with any number of irreducible factors and which are not necessarily $2$-step nilpotent. It states that there are either $0$ or $2^k$ such complex structures, with $k$ the number of irreducible factors of the metric Lie algebra. The motivation for this problem comes from differential geometry, for instance to construct non-parallel Killing-Yano $2$-forms on nilmanifolds or to describe the compact Chern-flat quasi-K\"ahler manifolds.

The main tool we develop is the unique orthogonal decomposition into irreducible factors for metric Lie algebras with no non-trivial abelian factor. This is a generalization of a recent result which only deals with nilpotent Lie algebras over the real numbers. Not only do we apply this fact to describe the orthogonal bi-invariant complex structures on a given metric Lie algebra, but it also gives us a method to study different inner products on a given Lie algebra, computing the number of irreducible factors and orthogonal bi-invariant complex structures for varying inner products.
\end{abstract}

\section{Introduction}

In differential geometry, a natural class of examples is provided by homogeneous manifolds, which are defined as Riemannian manifolds having a transitive action by isometries. Even the special case of Lie groups $G$ equipped with a left-invariant metric gives a rich geometry with many open questions. The geometry of these spaces is completely described by the tangent space at the identity element, which forms a metric Lie algebra, namely a Lie algebra $\lie$ equipped with an inner product $\langle \cdot, \cdot \rangle: \lie \times \lie \to \R$. A left-invariant almost complex structure on the manifold $G$ is then equivalent to a linear map $J: \lie \to \lie$ on the metric Lie algebra satisfying $J^2 = - \I_\lie$. 

This paper studies the almost complex structures on the metric Lie algebra $\lie$ which make the corresponding Lie group $G$ into a complex Lie group with a left-invariant metric. The $J$ satisfying this condition are called orthogonal bi-invariant complex structures, for which we will give more background in Section \ref{sec:notation}. The main question we study is how to describe all the different orthogonal bi-invariant complex structures on a given metric Lie algebra $\lie$.
\begin{QN}
\label{mainquestion}
	Given a metric Lie algebra $\lie$, how many orthogonal bi-invariant complex structures $J: \lie \to \lie$ does $\lie$ admit? Is there a method to describe them? 
\end{QN}
\noindent As explained before, given an orthogonal bi-invariant complex structure $J$, the real Lie algebra $\lie$ can be made into a complex Lie algebra with a Hermitian inner product. Vice versa, starting from a complex Lie algebra with a Hermitian inner product, we can take the underlying real Lie algebra and the real part of the inner product to find a real metric Lie algebra with an orthogonal bi-invariant complex structure. Hence Question \ref{mainquestion} is equivalent to studying how many metric complex Lie algebras have an isometric underlying real Lie algebra. In this way, Question \ref{mainquestion} can be considered as the metric counterpart of \cite[Question 5]{dlv12-1}, which asked whether two different complex Lie algebras could have an isomorphic underlying real Lie algebra. Examples of such complex Lie algebras, including a general way of describing them, was provided in \cite{dere19-2}. 

The motivation for Question \ref{mainquestion} comes from several applications in differential geometry, for which more details can be found in the references given below. In \cite{dlv12-1} it was shown that bi-invariant complex structures are closely related to Hermitian manifolds which are Chern-flat and quasi-K\"ahler. In fact, every compact manifold satisfying these conditions is isometric to the quotient of a complex $2$-step nilpotent Lie group $G$ by a cocompact lattice $\Gamma$. The almost-complex structure on $\Gamma \backslash G$ is not equal to the complex multiplication on $G$, but can be constructed from it by introducing a minus sign on the center, see \cite[Section 4.2.]{dlv12-1}. Hence, describing the number of orthogonal bi-invariant complex structures on a $2$-step nilpotent Lie algebra is equivalent to describing the number of different ways it can be made into a Chern-flat and quasi-K\"ahler manifold. 

A second application for orthogonal bi-invariant complex structures lies in the construction of non-trivial Killing-Yano $2$-forms as introduced in \cite{yano59-1}. These can be considered as the generalization of Killing vector fields since Killing-Yano $1$-forms are the dual of Killing vector fields. They play an important role in physics, namely as a condition for vacuum solutions of Einstein's field equations, see \cite{pw70-1}. Any parallel $p$-form is Killing-Yano, thus research focuses on constructing examples which are not parallel. On $2$-step nilpotent Lie groups, there are no left-invariant non-degenerate parallel $2$-forms by \cite[Theorem 5.1.]{ad18-1}, hence these are natural candidates for finding such examples. In both \cite{ad19-1,dbm19-1} independently, a bijection is constructed between Killing-Yano $2$-forms up to multiplication by a positive real number and orthogonal bi-invariant complex structures on a $2$-step nilpotent Lie algebra. The question how many orthogonal bi-invariant complex structures exist translates to the dimension of the space of Killing-Yano $2$-forms on a $2$-step nilpotent Lie algebra. 

On abelian Lie algebras of dimension $2n \geq 4$, there exists a continuum of different orthogonal bi-invariant structures, see Example \ref{ex:uncountable}. Therefore we will always assume that our Lie algebra $\lie$ has no non-zero abelian factor. Under that assumption, we show that any metric Lie algebra has finitely many orthogonal bi-invariant complex structures.

\begin{repThm}{thm:main}
Assume that a Lie algebra $\lie$ with no non-zero abelian factors has an orthogonal bi-invariant complex structure $J: \lie \to \lie$ and that the corresponding complex Lie algebra has a metric decomposition $$(\lie,J) = \displaystyle\bigoplus_{j=1}^k \left(\lie_j,J_j\right) $$ into irreducible components $\left(\lie_j,J_j\right)$. Every orthogonal bi-invariant complex structure on $\lie = \displaystyle\bigoplus_{j=1}^k \lie_j$ is of the form $\pm J_1 \oplus \ldots \oplus \pm J_k$. 
\end{repThm}
\noindent As we will show in Theorem \ref{thm:irreducible} the $k$ of this theorem, which is the number of irreducible factors of $(\lie,J)$, is equal to the number of irreducible factors of $\lie$ as a real metric Lie algebra, so it can be computed without having information about a complex structure $J$.

In the special case of irreducible Lie algebras which are $2$-step nilpotent, Theorem \ref{thm:main} was given in \cite[Proposition 4.9.]{dbm19-1}. For Lie algebras without inner product, \cite{dere19-2} shows that any Lie algebra has at most finitely many bi-invariant complex structures up to isomorphism and gives an explicit way to describe all of them, similarly as in Theorem \ref{thm:main}. The main tool in that proof was to consider so-called conjugate Lie algebras for elements of the Galois group of a field extension. Since the primary focus of this paper is the real case, there is only one non-trivial element in the Galois group, such that the conjugate Lie algebra corresponds to replacing a complex structure $J$ by $-J$.

As is clear from the statement, the main tool we will develop is the orthogonal decomposition of a metric Lie algebra into irreducible factors. For Lie algebras without metric, a similar result was given in \cite{fgh13-1}, where the indecomposable factors were unique up to isomorphism. The result for metric Lie algebras is stronger, in the sense that the irreducible factors are unique ideals of the Lie algebra.

\begin{repThm}{thm:decomposition}
Let $\lie$ be a metric Lie algebra with no non-zero abelian factor. There exist unique irreducible factors $\lie_j \subset \lie$ such that $$\lie = \bigoplus_{j=1}^k \lie_j$$ is written as an orthogonal decomposition of the ideals $\lie_j$. 
\end{repThm}

Both main results raise the question how the choice of inner product on the Lie algebra $\lie$ influences both the number of irreducible factors and the number of orthogonal bi-invariant complex structures. Section \ref{sec:app} will give some insight in how the inner product and algebraic structure of the Lie algebra are related.


\paragraph*{Acknowledgements}

I would like to thank Adri\'an Andrada for introducing me to Killing-Yano forms during my research stay at Universidad Nacional de C\'ordoba, which lead to this work.

\section{Preliminaries}
\label{sec:prel}

This section introduces the definitions and notations for the remainder of the paper. We start by recalling the basic notions of metric Lie algebras, afterwards introduce almost complex structures on real Lie algebras, including the relation to complex Lie algebras, and finally discuss the complexification or a real Lie algebra. Every vector space and Lie algebra we consider is assumed to be finite-dimensional.

\paragraph{Metric Lie algebras} 

\

A metric Lie algebra is a Lie algebra $\lie$ over some subfield $F \subset \C$, equipped with a positive definite Hermitian form $$\langle  \hspace{1mm}\cdot\hspace{1mm}, \hspace{1mm}\cdot \hspace{1mm}\rangle: \lie \times \lie \to F,$$ which we call the \textit{inner product} on $\lie$. In this paper, linearity of the Hermitian form is taken in the first component, the antilinearity in the second component. If the field $F$ consists of real numbers only, so $F \subset \R$, the inner product $\langle \cdot , \cdot \rangle$ is in fact a symmetric bilinear form. If the Lie algebra $\lie$ is abelian, meaning that $\lie$ is just a vector space, this corresponds to the regular notion of inner products on a vector space. Note that the space of inner products on a real vector space is a manifold of dimension $\frac{n (n+1)}{2}$ where $n$ is the dimension of the vector space, whereas the space of inner products on a complex vector space of dimension $n$ is a manifold of dimension $n^2$. 

The geometric importance of metric Lie algebras lies in the special case $F = \R$. Indeed, if $G$ is a Lie group with Lie algebra $\lie$, then the inner product $\langle \cdot, \cdot \rangle$ corresponds to a left-invariant Riemannian metric on $G$. This makes it possible to talk about geometric properties on a metric Lie algebra, such as the Riemannian curvature tensor or the Ricci curvature, see \cite{miln76-1}. But also the case $F = \C$ is crucial, because of its relation to bi-invariant complex structures on the underlying real Lie algebra. Hence we formulate some of the results in this paper, in particular Theorem \ref{thm:decomposition}, for general subfields of $\C$, comparable to \cite{dere19-2}.

Let $U$ be any vector space over the field $F$. If $f: U \to U$ is a linear map, we say that a subspace $V \subset U$ is $f$-invariant if $f(V) \subset V$. If $V, \hspace{1mm} W \subset U$ are subspace with $V + W = U$ and $V \cap W = 0$, then we write $U = V \oplus W$, representing thus the internal direct sum of subspaces of $U$. In the case that $U$ is equipped with an inner product, we additionally assume that $V$ and $W$ are orthogonal. In this case, we write $V = W^\perp$ and call $V$ the orthogonal complement of $W$. If $U = V \oplus W$ and $f_1: V \to V$ and $f_2: W \to W$ are linear maps, we will write $f_1 \oplus f_2: U \to U$ for the unique linear map on $U$ which is an extension of $f_1$ and $f_2$ to the whole vector space. Given metric Lie algebras $\lie$ and $\lieh$, the direct sum $\lie \oplus \lieh$ has a unique inner product such that $\lie$ and $\lieh$ are orthogonal. If no other inner product is specified, we will always assume that the inner product satisfies this assumption. We call a linear map $f: U \to U$ symmetric if $\langle f(X), Y \rangle = \langle X , f(Y) \rangle$ and skew-symmetric if $\langle f(X), Y \rangle = - \langle X , f(Y) \rangle$ for all $X, Y \in U$. Since any Lie algebra is in particular also a vector space, we can use the same notations as above in the context of metric Lie algebras. 

Let $\lie$ be any metric Lie algebra. Note that in general for a subalgebra $\lieh \subset \lie$, it does not hold that $\lieh^\perp$ is also subalgebra of $\lie$, even not if we additionally assume $\lieh$ to be an ideal. In the special case where both $\lieh$ and $\lieh^\perp$ are ideals, the internal direct sum $\lie = \lieh \oplus \lieh^\perp$ is an orthogonal direct sum of Lie algebras. Both ideals $\lieh$ and $\lieh^\perp$ are called factors of the metric Lie algebra $\lie$.
\begin{Def}
	We say that a non-zero Lie algebra $\lie$ is \textbf{irreducible} if for every factor $\lieh$ it holds that either $\lieh = 0$ or $\lieh = \lie$.
\end{Def}
\noindent Since our Lie algebras are assumed to be finite-dimensional, every Lie algebra has an orthogonal decomposition of the form $$ \lie = \bigoplus_{j=1}^k \lie_j$$ where every $\lie_j \subset \lie$ is an irreducible factor of $\lie$. The main result of Section \ref{sec:decomp} is that the irreducible factors $\lie_j$ are unique if $\lie$ has no non-zero abelian factor.

\paragraph{Complex structures}

\

The relation between real and complex Lie algebras is given by the notion of (almost) complex structures on Lie algebras.
\begin{Def}
	Let $\lie$ be any real Lie algebra and $J: \lie \to \lie$ a linear map.
	\begin{itemize}
		\item The map $J$ is called an \textbf{almost complex structure} on $\lie$ if $J^2 = - \I_\lie$.
		\item We call $J$ a \textbf{bi-invariant} complex structure if $$J \left( [X,Y]\right) = [J(X),Y]$$ for all $X, Y \in \lie$.
		\item If $\lie$ is a metric Lie algebra with inner product $\langle \cdot, \cdot \rangle$, we call $J$ \textbf{orthogonal} if $$\langle J(X), J(Y) \rangle = \langle X, Y \rangle$$ for all $X, Y \in \lie$, so if $J$ is an isometry for the inner product. The inner product $\langle \cdot, \cdot \rangle$ is called \textbf{Hermitian} with respect to the almost complex structure $J$.
	\end{itemize}
\end{Def}

\noindent The existence of an almost complex structure on a real Lie algebra implies that the Lie algebra has even dimension.  

Assume that $\lieh$ is a complex Lie algebra. By restricting scalar multiplication to the real numbers $\R$, we get a real Lie algebra $\lie$, for which it holds that $\dim_\R(\lie) = 2 \dim_\C(\lieh)$ and we call $\lie$ the (real) underlying Lie algebra of $\lieh$. In this case, we have a linear map $J: \lie \to \lie$ given by $J(X) = i X$ for $X \in \lie$. The map $J$ is a bi-invariant complex structure on $\lie$. Vice versa, given a real Lie algebra $\lie$ with a bi-invariant complex structure $J$, we can make it into a complex Lie algebra $\lieh$, by defining complex multiplication as $$(a + bi) X = a X + b J(X)$$ for all $a, b \in \R$. Since the focus lies on the underlying Lie algebra $\lie$ and the complex structure $J$, we will write $(\lie,J) = \lieh$ for complex Lie algebras.

Moreover, if $\lie$ is a metric Lie algebra and $J$ an orthogonal bi-invariant complex structure, we can equip the complex Lie algebra $(\lie,J)$ with an inner product $\langle \cdot, \cdot \rangle_\C$ defined as $$\langle X , Y \rangle_\C = \frac{\langle X , Y \rangle + i \langle X, J Y \rangle}{2}.$$ \label{definitionofinneronC} The dependency of $\langle \cdot, \cdot \rangle_\C$ on $J$ will be important in the proof of Proposition \ref{prop:uniquedecomposition}. To recover the inner product on $\lie$ we have the relation \begin{align}\label{eq:realpart}\langle X, Y \rangle = \langle X, Y \rangle_\C + \overline{\langle X, Y \rangle_\C}
\end{align} where $\overline{\lambda}$ is the complex conjugate of $\lambda \in \C$. 

More general, Equation (\ref{eq:realpart}) above defines an inner product $\langle \cdot, \cdot \rangle$ on the underlying real Lie algebra $\lie$ for every complex Lie algebra $\lieh$ with inner product $\langle \cdot, \cdot \rangle_\C$, for which the corresponding complex structure $J(x) = ix$ is orthogonal. Note that sometimes in literature, see for example \cite{huyb05-1}, the inner product $\langle \cdot,\cdot\rangle$ is defined without the factor $2$ in the denominator. We have chosen this definition since it will be convenient in the proof of Proposition \ref{prop:uniquedecomposition}. Since both metrics are conformally equivalent (meaning that they are the same up to scalar multiplication) this does not have any influence on the geometric properties of $(\lie,J)$ such as its orthogonal decomposition. Because the metric structures on the real and complex Lie algebra are equivalent, we can go back and forth without further mentioning the different metrics. 

\paragraph{Complexification}

\

If $\lie$ is a real Lie algebra, we can make it into a complex Lie algebra $\lie^\C$ by extending the scalars, namely taking the tensor product $\lie^\C= \lie \otimes_\R \C$. Every linear map $f: \lie \to \lie$ extends to a $\C$-linear map on $\lie^\C$ which we denote as $f^\C : \lie^\C \to \lie^\C$. If $f: \lie \to \lie$ is a morphism of real Lie algebras, then also $f^\C: \lie^\C \to \lie^\C$ will be a morphism of complex Lie algebras. The complex dimension of the complexification is equal to the real dimension of the original Lie algebra, so $\dim_\C(\lie^\C) = \dim_\R(\lie)$.

Any inner product $\langle \cdot, \cdot \rangle$ on $\lie$ extends to an inner product $\langle \cdot, \cdot \rangle^\C$ on $\lie^\C$ in the following way. Take any orthonormal basis $X_1, \ldots, X_n$ for the real Lie algebra $\lie$, then this is also a basis for the complexification $\lie^\C$. We now define the new inner product by taking $X_1, \ldots, X_n$ as an orthonormal basis of $\lie^\C$. More explicitly, the inner product is defined as 
\begin{align*}
\langle Y, Z \rangle^\C =\sum_{j=1}^n y_j \overline{z}_j
\end{align*}
for all $Y = \displaystyle \sum_{j=1}^n y_j X_j, Z = \displaystyle \sum_{j=1}^n z_j X_j \in \lie^\C$. For all elements $Y, Z \in \lie$ it holds that $\langle Y, Z \rangle^\C = \langle Y, Z \rangle$. Sometimes we will write $\langle \cdot,\cdot \rangle^\C$ without superscript ${}^\C$ when no confusion is possible.

\label{sec:notation}

\section{Decomposition into irreducible factors}
\label{sec:decomp}

This section discusses the decomposition of a metric Lie algebra into irreducible factors, as introduced in Section \ref{sec:notation}. In the case of Lie algebras without inner product \cite[Theorem 3.3.]{fgh13-1} shows that the indecomposable factors of a decomposition are unique up to isomorphism. However, \cite[Example 2.3.]{dere19-2} demonstrates that these isomorphic factors can be different ideals of the Lie algebra and thus two decompositions are not necessarily identical. The main result of this section shows that a stronger property holds for metric Lie algebras, namely that the irreducible factors of an orthogonal decomposition are unique ideals of the Lie algebra. 
\begin{Thm}
	\label{thm:decomposition}
	Let $\lie$ be a metric Lie algebra with no non-zero abelian factor. There exist a unique irreducible factors $\lie_j \subset \lie$ such that $$\lie = \bigoplus_{j=1}^k \lie_j$$ is written as an orthogonal decomposition of the ideals $\lie_j$. 
\end{Thm}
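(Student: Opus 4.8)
The plan is to dispatch existence quickly and then focus on uniqueness, which carries all the content. Existence follows by induction on $\dim\lie$: if $\lie$ is not already irreducible it has a factor $\lieh$ with $0 \neq \lieh \neq \lie$, so $\lie = \lieh \oplus \lieh^\perp$ is an orthogonal sum of two ideals of strictly smaller dimension, and one recurses. For uniqueness I would isolate the following key claim: \emph{if $\lieh \subseteq \lie$ is any factor and $\lie_i \subseteq \lie$ is an irreducible factor, then $\lie_i \subseteq \lieh$ or $\lie_i \subseteq \lieh^\perp$.} Granting this, uniqueness is immediate. Given a second irreducible orthogonal decomposition $\lie = \bigoplus_j \liem_j$, fix one $\liem_j$; it is a factor, so by the claim each $\lie_i$ lies in $\liem_j$ or in $\liem_j^\perp$. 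Letting $S = \{i : \lie_i \subseteq \liem_j\}$, the subspaces $\bigoplus_{i \in S}\lie_i \subseteq \liem_j$ and $\bigoplus_{i \notin S}\lie_i \subseteq \liem_j^\perp$ are orthogonal and sum to $\lie$, so comparing dimensions forces $\liem_j = \bigoplus_{i \in S}\lie_i$. Since each $\lie_i$ with $i \in S$ is a factor of $\liem_j$, irreducibility of $\liem_j$ makes $S$ a singleton with $\liem_j = \lie_i$. Thus every $\liem_j$ equals some $\lie_i$ and, by symmetry, the two families of ideals coincide.

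To prove the claim I would first record that intersections of factors are factors: if $\lieh$ and $\lie_i$ are factors, then $\lie_i \cap \lieh$ is an ideal whose orthogonal complement $\lie_i^\perp + \lieh^\perp$ is a sum of ideals, hence an ideal, so $\lie_i \cap \lieh$ is a factor of $\lie$. As $\lie_i \cap \lieh \subseteq \lie_i$ and its orthogonal complement inside $\lie_i$ is $(\lie_i^\perp + \lieh^\perp) \cap \lie_i$ — an ideal of $\lie$ contained in $\lie_i$, hence an ideal of $\lie_i$ — it is in fact a factor of $\lie_i$; the same holds for $\lie_i \cap \lieh^\perp$. Since $\lie_i$ is irreducible, each of $\lie_i \cap \lieh$ and $\lie_i \cap \lieh^\perp$ is $0$ or $\lie_i$, and if either equals $\lie_i$ we obtain the desired inclusion.

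The hard part, and the only place the hypothesis of no non-zero abelian factor is needed, is ruling out the remaining case $\lie_i \cap \lieh = 0 = \lie_i \cap \lieh^\perp$. Here the orthogonal projections $P \colon \lie \to \lieh$ and $Q \colon \lie \to \lieh^\perp$ are Lie algebra homomorphisms (because $[\lieh,\lieh^\perp] = 0$) and restrict to \emph{injections} on $\lie_i$. The plan is to show $A := P(\lie_i) \subseteq \lieh$ is abelian: $A$ is a subalgebra, and for $a = Px$ with $x \in \lie_i$ and any $c \in A$ the relation $[\lieh,\lieh^\perp]=0$ gives $[c,x] = [c,Px] = [c,a] \in A$, while $[c,x] \in \lie_i$ since $\lie_i$ is an ideal; as $A \cap \lie_i \subseteq \lieh \cap \lie_i = 0$ this forces $[c,a]=0$, so $[A,A]=0$. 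Then $\lie_i \cong A$ via the injection $P$ is a non-zero abelian factor of $\lie$, contradicting the hypothesis, so the bad case never occurs. I expect the main conceptual obstacle to be recognizing that this diagonal/graph embedding of $\lie_i$ across $\lieh$ and $\lieh^\perp$ is exactly the degeneracy that the no-abelian-factor assumption is designed to exclude; the surrounding verifications (that $P$ is a homomorphism and that $A\cap\lie_i=0$) are routine once that mechanism is in place.
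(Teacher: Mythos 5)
Your proof is correct, and it takes a genuinely different route from the paper's. The paper develops a formalism of orthogonal projections (symmetric idempotent maps satisfying $p([X,Y])=[p(X),Y]$), proves that any two symmetric or skew-symmetric maps of this kind commute when $\lie$ has no non-zero abelian factor (Proposition \ref{prop:commutegeneral}, which uses $Z(\lie)\subset[\lie,\lie]$ and the invariant splitting $\lie=[\lie,\lie]\oplus[\lie,\lie]^\perp$), deduces that the composition of two projections is a projection onto the intersection of the corresponding factors (Corollary \ref{prop:projcommute}), and finally writes any irreducible factor $\lieh$ as the direct sum of its intersections $\lieh\cap\lie_j$. You obtain the key fact that intersections of factors are factors by pure linear algebra instead --- $(\lie_i\cap\lieh)^\perp=\lie_i^\perp+\lieh^\perp$ together with the fact that sums and intersections of ideals are ideals --- with no hypothesis on abelian factors at all, and you invoke the hypothesis exactly once, to kill the remaining ``diagonal'' case $\lie_i\cap\lieh=0=\lie_i\cap\lieh^\perp$: there the projection $P$ embeds $\lie_i$ into $\lieh$ with abelian image, so $\lie_i$ would itself be a non-zero abelian factor. (One phrasing correction: it is $\lie_i$, not $A=P(\lie_i)$, that is the factor of $\lie$ furnishing the contradiction; $A$ need not be a factor of $\lie$, but $\lie_i$ is abelian because $\restr{P}{\lie_i}$ is an injective morphism onto $A$.) Your route is more elementary and self-contained, avoiding Lemma \ref{lem:noabelian} and the commutation machinery, and it isolates precisely the mechanism that fails in the abelian case --- diagonal subspaces, which are exactly what make decompositions of $\R^2$ non-unique. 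The paper's route costs more up front, but Proposition \ref{prop:commutegeneral} is reused later for orthogonal bi-invariant complex structures (Proposition \ref{prop:commute}, Theorem \ref{thm:irreducible}), where your intersection lemma would not suffice; within the paper, the projection machinery therefore does double duty.
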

\noindent This will form one of the main ingredients for the main result describing the possible orthogonal bi-invariant complex structures on a metric Lie algebra.

Partial results for this decomposition were already given in \cite{dbm20-2} for the special case of nilpotent Lie algebras over $\R$, generalizing a previous result which only dealt with real $2$-step nilpotent Lie algebra in \cite{dbm20-1}. In these papers, the authors use the de Rham decomposition of a Lie group with a left-invariant metric and show that this induces a decomposition of the nilpotent Lie algebra into irreducible factors. As indicated in \cite[Example 2.3.]{dbm20-2} this method does not work for general Lie algebras, since the de Rahm decomposition does not lead to ideals of the corresponding Lie algebra. The proof we present here is algebraic in nature, by studying properties of orthogonal projections of the metric Lie algebra. Hence it works without any conditions on the field or the Lie algebra, except for the condition about no non-zero abelian factors.

On abelian metric Lie algebras, or equivalently vector spaces with an inner product, there exist many orthogonal bases by the Gram-Schmidt process. Each of these bases induces an orthogonal decomposition of the vector space into irreducible subspaces, implying that in the abelian case a decomposition into irreducible factors is far from unique. Also when the Lie algebra has an abelian factor of dimension $\geq 2$, uniqueness is not possible due to this observation. 

Hence in the remaining part of this section we assume that $\lie$ is a metric Lie algebra with no non-zero abelian factor. The following characterization of such Lie algebras is well-known, but for completeness we present the short proof.
\begin{Lem}
	\label{lem:noabelian}
	Let $\lie$ be a metric Lie algebra, then $\lie$ does not have a non-zero abelian factor if and only if $Z(\lie) \subset [\lie,\lie]$.
\end{Lem}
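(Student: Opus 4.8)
The plan is to prove the equivalence by identifying the abelian factors of $\lie$ with the subspace $Z(\lie)\cap[\lie,\lie]^\perp$. First I would pin down where an abelian factor must live. If $\mathfrak{a}$ is an abelian factor, then $\mathfrak{a}$ and $\mathfrak{a}^\perp$ are both ideals, so $[\mathfrak{a},\mathfrak{a}^\perp]\subset\mathfrak{a}\cap\mathfrak{a}^\perp=0$; together with $[\mathfrak{a},\mathfrak{a}]=0$ this gives $[\mathfrak{a},\lie]=0$, i.e.\ $\mathfrak{a}\subset Z(\lie)$. Since moreover $[\lie,\lie]=[\mathfrak{a}^\perp,\mathfrak{a}^\perp]\subset\mathfrak{a}^\perp$, we also get $\mathfrak{a}\subset[\lie,\lie]^\perp$. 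Hence every abelian factor is contained in $Z(\lie)\cap[\lie,\lie]^\perp$, and the implication ``$Z(\lie)\subset[\lie,\lie]\Rightarrow$ no non-zero abelian factor'' is immediate, because the hypothesis forces $Z(\lie)\cap[\lie,\lie]^\perp\subset[\lie,\lie]\cap[\lie,\lie]^\perp=0$.

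For the converse I would argue by contraposition and try to manufacture a non-zero abelian factor out of the assumption $Z(\lie)\not\subset[\lie,\lie]$. The natural candidate is $\mathfrak{a}:=Z(\lie)\cap[\lie,\lie]^\perp$ itself: it is central, hence an abelian ideal, and I would check that $\mathfrak{a}^\perp$ is again an ideal by the short computation that for $X\in\lie$ and $W\in\mathfrak{a}^\perp$ the bracket $[X,W]$ lies in $[\lie,\lie]$, which is orthogonal to $\mathfrak{a}$ since $\mathfrak{a}\subset[\lie,\lie]^\perp$; thus $[X,W]\in\mathfrak{a}^\perp$. Granting this, $\mathfrak{a}$ is always an abelian factor, so the whole lemma reduces to showing that $Z(\lie)\cap[\lie,\lie]^\perp\neq 0$ whenever $Z(\lie)\not\subset[\lie,\lie]$.

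This final reduction is the step I expect to be the \emph{main obstacle}. The tempting approach is to take $z\in Z(\lie)\setminus[\lie,\lie]$, split it orthogonally as $z=z_\parallel+z_\perp$ with $z_\parallel\in[\lie,\lie]$ and $z_\perp\in[\lie,\lie]^\perp$, and hope that $z_\perp$ is again central, which would place a non-zero vector in $Z(\lie)\cap[\lie,\lie]^\perp$. The difficulty is precisely that the correction term $z_\parallel$ lies in $[\lie,\lie]$ but need not be central, so centrality of $z_\perp$ is not automatic; controlling this projection is where the real content sits and where the metric structure of $\lie$ must genuinely be used. I would therefore focus the effort on proving that orthogonal projection onto $[\lie,\lie]^\perp$ carries $Z(\lie)$ into $Z(\lie)$, as this is exactly what is needed to close the argument.
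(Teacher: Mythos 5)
Your first direction is complete and correct, and your observation that $\mathfrak{a}:=Z(\lie)\cap[\lie,\lie]^\perp$ is always an abelian factor (in the orthogonal sense) is also correct and cleanly argued. The gap is exactly the step you flag yourself, and the bad news is that it cannot be closed: for a general inner product it is simply false that $Z(\lie)\not\subset[\lie,\lie]$ forces $Z(\lie)\cap[\lie,\lie]^\perp\neq 0$. Concretely, let $\lie=\aff(\R)\oplus\R$ with basis $A,B,W$, only non-trivial bracket $[A,B]=B$, and the positive definite inner product determined by $\langle A,A\rangle=\langle B,B\rangle=\langle W,W\rangle=1$, $\langle B,W\rangle=\tfrac{1}{2}$, $\langle A,B\rangle=\langle A,W\rangle=0$. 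Then $Z(\lie)=\R W$ and $[\lie,\lie]=\R B$, so $Z(\lie)\not\subset[\lie,\lie]$, yet $Z(\lie)\cap[\lie,\lie]^\perp=0$: the orthogonal projection of $W$ onto $[\lie,\lie]^\perp$ is $W-\tfrac{1}{2}B$, which is not central since $[A,W-\tfrac{1}{2}B]=-\tfrac{1}{2}B$. In fact none of the abelian ideals $\R B$, $\R W$, $\R B\oplus\R W$ of this example has an orthogonal complement which is an ideal, so this metric Lie algebra has no non-zero abelian factor in the orthogonal sense at all, even though $Z(\lie)\not\subset[\lie,\lie]$. Hence the equivalence, read with the orthogonal notion of factor, is false, and no refinement of your projection argument can rescue it.

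The statement is correct when ``abelian factor'' is understood algebraically: a non-zero abelian ideal admitting \emph{some} complementary ideal, not necessarily the orthogonal complement. Under that reading your missing step is unnecessary. One takes $\mathfrak{a}$ to be the orthogonal complement of $Z(\lie)\cap[\lie,\lie]$ \emph{inside} $Z(\lie)$ (this is what the paper does), which is non-zero exactly when $Z(\lie)\not\subset[\lie,\lie]$; since $\mathfrak{a}\cap[\lie,\lie]=0$, any vector-space complement of $\mathfrak{a}$ that contains $[\lie,\lie]$ is automatically an ideal, giving the required splitting $\lie=\lieh\oplus\mathfrak{a}$. Note that the paper's printed proof instead asserts that the orthogonal complement of this $\mathfrak{a}$ in $\lie$ contains $[\lie,\lie]$ --- precisely the unjustified orthogonality you identified --- and this assertion fails in the example above. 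So your analysis correctly locates a genuine defect in the argument as written; the right conclusion, though, is not that a cleverer projection lemma is waiting to be proved, but that the lemma (and the standing hypothesis ``no non-zero abelian factor'') must be interpreted with algebraic, non-orthogonal complements, where the orthogonality issue disappears.
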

Here $Z(\lie)$ is the center of the Lie algebra $\lie$, given by $$Z(\lie) = \left\{ X \in \lie \suchthat \forall \hspace{1mm} Y \in \lie: [X,Y] = 0\right\}.$$ Note that the last condition of Lemma \ref{lem:noabelian} does not depend on the inner product on $\lie$.
\begin{proof}
	First assume $\lie$ has an abelian factor $\mathfrak{a} \neq 0$, so we can write $\lie = \lieh \oplus \mathfrak{a}$. In this case, $[\lie,\lie] = [\lieh,\lieh] \subset \lieh$ and $[\mathfrak{a},\lie] = [\mathfrak{a},\mathfrak{a}] = 0$, showing that $\mathfrak{a}$ is a subspace of $Z(\lie)$ which does not lie in $[\lie,\lie]$. This gives the first implication of the lemma.
	
	For the other direction, consider the subspace $$\mathfrak{a} = \{X \in Z(\lie) \suchthat \forall \hspace{1mm} Y \in Z(\lie) \cap [\lie,\lie]: \langle X, Y\rangle = 0 \} \subset Z(\lie),$$ which is exactly the orthogonal complement of $Z(\lie) \cap [\lie,\lie]$ in $Z(\lie)$. Note that $\mathfrak{a}$ is an abelian ideal of $\lie$ and the orthogonal complement contains $[\lie,\lie]$ and hence is also an ideal. We conclude that if $\mathfrak{a}$ is non-zero, or equivalently if $Z(\lie) \cap [\lie,\lie] \neq Z(\lie)$, the Lie algebra has a non-zero abelian factor. This finishes the other implication of the lemma. 
\end{proof}

The main technique for studying factors of a metric Lie algebra is to shift attention to the corresponding orthogonal projections. We first give the definition of an orthogonal projection of a metric Lie algebra. 

\begin{Def}
	\label{def:projection}
	We call a morphism of Lie algebras $p: \lie \to \lie$ an \textbf{orthogonal projection onto $p(\lie)$} if $p \comp p = p$ and for all $X, Y \in \lie$ it holds that
	\begin{enumerate}[$(i)$]
	\item $p([X,Y]) = [p(X),Y]$ and
	\item $\langle p(X),Y \rangle = \langle X, p(Y) \rangle$, i.e.~$p$ is symmetric.
	\end{enumerate}
\end{Def}

\noindent By the anticommutativity of the Lie bracket, condition ($i$) is equivalent to $$p([X,Y]) = [X,p(Y)].$$ We first show that every projection onto a factor satisfies these conditions.
\begin{Ex}
	\label{ex:orthogonalprojection}
Let $\lie$ be a metric Lie algebra with an orthogonal decomposition $\lie = \lie_1 \oplus \lie_2$. Consider the map $p: \lie \to \lie$ given by
\begin{align*}
p(X_1 + X_2) = X_1
\end{align*} for all $X_1 \in \lie_1, \hspace{1mm} X_2 \in \lie_2$. Clearly $p$ is a morphism of Lie algebras since $\lie_2$ is an ideal and the definition shows that $p \comp p = p$. For every $X_1, Y_1 \in \lie_1, \hspace{1mm} X_2, Y_2 \in \lie_2$ we have that
$$p([X_1 + X_2, Y_1 + Y_2]) = p([X_1,Y_1] + [X_2,Y_2]) = [X_1,Y_1] = [X_1,Y_1 + Y_2] = [p(X_1+X_2),Y_1 + Y_2]$$ and so the first condition of Definition \ref{def:projection} holds. A similar computation implies that also the second condition holds and thus $p$ is an orthogonal projection onto $\lie_1$.
\end{Ex}

The following lemma shows that the only orthogonal projections are the ones introduced in Example \ref{ex:orthogonalprojection}, so every orthogonal projection is onto a factor of the Lie algebra.

\begin{Lem}
Let $p: \lie \to \lie$ be an orthogonal projection of a metric Lie algebra $\lie$, then we have an orthogonal decomposition $$\lie = p(\lie) \oplus \ker(p).$$
\end{Lem}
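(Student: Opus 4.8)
The plan is to verify the four requirements of an orthogonal decomposition — that $p(\lie)$ and $\ker(p)$ span $\lie$, meet only in $0$, are orthogonal, and are both ideals — by isolating in turn each of the three defining properties of an orthogonal projection. The splitting into image and kernel will come purely from idempotency, the orthogonality from symmetry, and the ideal property from the bracket condition $(i)$.

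First I would obtain the vector-space direct sum $\lie = p(\lie) \oplus \ker(p)$ from $p \comp p = p$ alone. This is the standard decomposition induced by an idempotent: every $X$ writes as $X = p(X) + (X - p(X))$ with $X - p(X) \in \ker(p)$, and any $X$ lying in both summands satisfies $X = p(X) = 0$. No Lie-theoretic or metric input is needed at this stage.

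Next I would use the symmetry condition $(ii)$ to prove orthogonality. The key observation is that $p(X) = X$ for every $X \in p(\lie)$, again by idempotency; then for $X \in p(\lie)$ and $Y \in \ker(p)$ one moves $p$ across the inner product via $\langle p(X), Y \rangle = \langle X, p(Y) \rangle$ to conclude $\langle X, Y \rangle = 0$. This gives $\ker(p) = p(\lie)^\perp$.

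Finally, condition $(i)$ delivers the ideal property of both summands. For $X \in \ker(p)$ and arbitrary $Y \in \lie$, the identity $p([X,Y]) = [p(X),Y] = 0$ shows $[X,Y] \in \ker(p)$; for $X \in p(\lie)$ and arbitrary $Y \in \lie$, the same identity reads $p([X,Y]) = [p(X),Y] = [X,Y]$, placing $[X,Y]$ in the image. Together with the fact that $p(\lie)$ is a subalgebra (being the image of a Lie algebra morphism), these facts complete the orthogonal decomposition into the factors $p(\lie)$ and $\ker(p)$. I do not expect a genuine obstacle here: the whole argument is a short sequence of one-line checks, each using exactly one axiom, and the only subtlety is remembering to reuse $p(X) = X$ on $p(\lie)$ in both the orthogonality and the ideal steps.
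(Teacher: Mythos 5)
Your proof is correct and takes essentially the same approach as the paper's: symmetry of $p$ yields the orthogonality of $p(\lie)$ and $\ker(p)$, and condition $(i)$ together with $p(X)=X$ on the image yields the ideal property of both summands, exactly as in the paper. The only cosmetic difference is that you derive the direct-sum splitting from idempotency alone, while the paper gets it from the rank--nullity theorem combined with orthogonality; both are immediate.
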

\noindent The approach for the rest of this section will be to study orthogonal projections $p: \lie \to \lie$ as a substitute for factors of the metric Lie algebra $\lie$.

\begin{proof}
It is clear that $\ker(p)$ is an ideal of $\lie$ and that $\dim(p(\lie)) + \dim(\ker(p))= \dim(\lie)$ by the dimension theorem for linear maps. Hence it suffices to show that $p(\lie)$ is an ideal which is orthogonal to $\ker(p)$. 

For the first statement, assume that $X \in \lie$ and $p(Y) \in p(\lie)$, then $$[X,p(Y)] = p([X,Y]) \in p(\lie),$$ so $p(\lie)$ is an ideal. For the second statement, assume that $X \in \ker(p)$ and $p(Y) \in p(\lie)$, then $$\langle X, p(Y) \rangle = \langle p(X), Y \rangle = \langle 0, Y \rangle = 0.$$ This gives us the statement of the lemma.\end{proof}

By applying this result to irreducible metric Lie algebras, we see that there are only two posibilities for orthogonal projections.
\begin{Cor}
	\label{cor:irreducibleprojection}
If $\lie$ is an irreducible metric Lie algebra, then it only has two orthogonal projections $p: \lie \to \lie$, namely either $p(X) = 0$ or $p(X) = X$ for all $X \in \lie$.
\end{Cor}
To apply this corollary, we will need that the restriction of an orthogonal projection is again an orthogonal projection.

\begin{Lem}
\label{lem:restriction}
Let $p: \lie \to \lie$ be an orthogonal projection on a metric Lie algebra $\lie$ such that $p(\lie) \subset \lieh$ for a subalgebra $\lieh \subset \lie$. The restriction $\restr{p}{\lieh}: \lieh \to \lieh$ is an orthogonal projection of $\lieh$.
\end{Lem}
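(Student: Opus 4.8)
The plan is to verify that $\restr{p}{\lieh}$ inherits each of the four defining properties of an orthogonal projection (see Definition \ref{def:projection}) from $p$, where $\lieh$ is regarded as a metric Lie algebra with the inner product obtained by restricting $\langle \cdot, \cdot \rangle$ to $\lieh$. The single point that genuinely uses the hypothesis $p(\lie) \subset \lieh$ is that the restriction actually takes values in $\lieh$: for any $X \in \lieh \subset \lie$ we have $\restr{p}{\lieh}(X) = p(X) \in p(\lie) \subset \lieh$, so $\restr{p}{\lieh} : \lieh \to \lieh$ is a well-defined endomorphism of $\lieh$. This is precisely what makes the statement meaningful, since otherwise $\restr{p}{\lieh}$ need not preserve $\lieh$.

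Once the codomain is settled, I would observe that the remaining verifications are immediate restrictions of the corresponding identities for $p$. First, $\restr{p}{\lieh}$ is a morphism of Lie algebras, because the bracket on $\lieh$ agrees with the bracket on $\lie$ and $p$ is already a morphism. Next, idempotency follows from the image containment: for $X \in \lieh$ the element $p(X)$ again lies in $\lieh$, so $(\restr{p}{\lieh} \comp \restr{p}{\lieh})(X) = p(p(X)) = p(X) = \restr{p}{\lieh}(X)$. Condition $(i)$ holds since $p([X,Y]) = [p(X),Y]$ is valid for all $X,Y \in \lie$, hence in particular for all $X,Y \in \lieh$; and condition $(ii)$, the symmetry, holds because $\langle p(X), Y \rangle = \langle X, p(Y) \rangle$ persists when $X, Y$ range over $\lieh$ and the inner product there is the restricted one.

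I expect essentially no obstacle beyond bookkeeping: the whole content of the lemma is the observation that $p(\lie) \subset \lieh$ forces $\restr{p}{\lieh}$ to map $\lieh$ into itself, after which every axiom is inherited verbatim. The only subtlety worth a sentence is to confirm that $\lieh$ really is a metric Lie algebra, i.e.\ that the restriction of $\langle \cdot, \cdot \rangle$ to the subspace $\lieh$ remains positive definite; but this is automatic for any subspace of a metric Lie algebra, so no extra work is needed.
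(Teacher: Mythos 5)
Your proposal is correct and follows exactly the paper's argument: the paper's own (very terse) proof likewise notes that $\restr{p}{\lieh}$ is well-defined thanks to the hypothesis $p(\lie) \subset \lieh$, and that all conditions of Definition \ref{def:projection} are then inherited immediately from $p$. You have simply written out the verbatim verifications the paper leaves implicit, so there is nothing to change.
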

\begin{proof}
Note that $\restr{p}{\lieh}$ is indeed well-defined by the assumption on $p$. The conditions follow immediately from the fact that $p$ is an orthogonal projection.
\end{proof}

The main property for proving Theorem \ref{thm:decomposition} is that two orthogonal projections of a metric Lie algebra with no non-zero abelian factor always commute. We first formulate a more general proposition, which will imply as well that two orthogonal bi-invariant complex structures commute in the next section, see Corollary \ref{prop:commute}.

\begin{Prop}
	\label{prop:commutegeneral}
	Let $\lie$ be a metric Lie algebra with no non-zero abelian factor. Assume that $f_1: \lie \to \lie$ and $f_2: \lie \to \lie$ are two linear maps satisfying $$[f_j(X),Y] = f_j([X,Y])$$ for $1 \leq j \leq 2$ and for all $X, Y \in \lie$. If the maps $f_1$ and $f_2$ are symmetric or skew-symmetric, then the maps $f_1$ and $f_2$ commute. 
\end{Prop}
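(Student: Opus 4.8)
The plan is to prove that the commutator $g := f_1 f_2 - f_2 f_1$ vanishes identically. First I would record that, by anticommutativity of the bracket (exactly as noted after Definition \ref{def:projection}), the hypothesis $[f_j(X),Y] = f_j([X,Y])$ is equivalent to the companion identity $[X, f_j(Y)] = f_j([X,Y])$. Thus each $f_j$ satisfies $f_j([X,Y]) = [f_j(X), Y] = [X, f_j(Y)]$, and having both forms available is what makes the computation below go through.

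The crucial step is to evaluate both compositions on a bracket, alternating between the two forms. On one hand $f_1 f_2([X,Y]) = f_1([X, f_2(Y)]) = [f_1(X), f_2(Y)]$, and on the other hand $f_2 f_1([X,Y]) = f_2([f_1(X), Y]) = [f_1(X), f_2(Y)]$. The two land on the same expression, so $g$ vanishes on every bracket and hence on the whole derived subalgebra $[\lie,\lie]$. Next I would note that $g$ itself inherits the centroid identity $[g(X),Y] = g([X,Y])$, obtained by applying the hypothesis for $f_1$ and for $f_2$ in turn to each of $f_1 f_2$ and $f_2 f_1$. Since $[X,Y] \in [\lie,\lie]$, where $g$ already vanishes, this yields $[g(X),Y] = 0$ for all $X,Y$, i.e.\ $g(\lie) \subset Z(\lie)$.

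Here the no-abelian-factor hypothesis enters decisively, through Lemma \ref{lem:noabelian}: we have $Z(\lie) \subset [\lie,\lie]$, so $g(\lie) \subset [\lie,\lie]$, and since $g$ kills $[\lie,\lie]$ this forces $g^2 = 0$. Finally I would close with a positivity argument. Whatever the symmetry types of $f_1$ and $f_2$, a direct check of adjoints gives $g^* = \pm g$ in all four cases (two symmetric or two skew-symmetric factors make $g$ skew-symmetric, a mixed pair makes $g$ symmetric), so $g^* g = \pm g^2 = 0$. Positive-definiteness of the inner product then gives $\langle g(X), g(X)\rangle = \langle X, g^* g(X)\rangle = 0$ for every $X$, whence $g = 0$ and $f_1, f_2$ commute.

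The main obstacle is the chained computation of the second step, arranging both compositions to reduce to the common value $[f_1(X), f_2(Y)]$; this is also where having both forms of the centroid identity is essential. The second conceptual point is recognizing that the no-abelian-factor condition is precisely what upgrades the two facts ``$g$ vanishes on $[\lie,\lie]$'' and ``$g(\lie) \subset Z(\lie)$'' into the single relation $g^2 = 0$. Once that is in place, the symmetry bookkeeping and the positivity conclusion are routine.
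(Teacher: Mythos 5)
Your proof is correct, but it takes a genuinely different route from the paper's. The paper splits $\lie$ orthogonally as $[\lie,\lie] \oplus [\lie,\lie]^\perp$: it uses the (skew-)symmetry of $f_1, f_2$ to show that $V = [\lie,\lie]^\perp$ is invariant, proves commutativity on $[\lie,\lie]$ by essentially the same bracket computation you use, and then handles $V$ by contradiction: a non-zero value of $g = f_1 f_2 - f_2 f_1$ at a point of $V$ lies in $V$, hence outside $Z(\lie)$ (since $V \cap Z(\lie) = 0$ by Lemma \ref{lem:noabelian}), yet it brackets trivially with everything because $g$ vanishes on $[\lie,\lie]$. You never decompose $\lie$: you show $g = 0$ on $[\lie,\lie]$, promote the identity $[g(X),Y] = g([X,Y])$ to $g$ itself to conclude $g(\lie) \subset Z(\lie)$, feed in Lemma \ref{lem:noabelian} to get $g(\lie) \subset Z(\lie) \subset [\lie,\lie]$ and hence $g^2 = 0$, and only then invoke the metric, via $g^* = \pm g$ and positive definiteness, to kill $g$. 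So the two proofs spend the hypotheses in different places: the paper uses the (skew-)symmetry for the invariance of $[\lie,\lie]^\perp$ and the abelian-factor condition for $V \cap Z(\lie) = 0$, while you use the abelian-factor condition for the nilpotency of $g$ and the symmetry only in the final positivity step $\langle g(X),g(X)\rangle = \pm\langle X, g^2(X)\rangle = 0$. Your version has the merit of isolating exactly what survives without any symmetry assumption --- namely $g(\lie) \subset Z(\lie)$, which is precisely the remark the paper makes after the proposition and illustrates with Example \ref{ex:notcommute} --- and of packaging the metric input into the single standard fact that a symmetric or skew-symmetric map whose square is zero must vanish. The paper's version, in exchange, produces the invariant orthogonal splitting along $[\lie,\lie]$, which fits the projection formalism running through Section \ref{sec:decomp}. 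Both arguments work verbatim over any subfield $F \subset \C$ with a Hermitian inner product.
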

\begin{proof}
	Note that the assumption implies that the commutator subalgebra $[\lie,\lie]$ is invariant under $f_1$ and $f_2$. Just as for the orthogonal projections, we have that $$f_j([X,Y]) = [X,f_j(Y)]$$ for all $X, Y \in \lie$ by the anticommutativity of the Lie bracket. Write $V = [\lie,\lie]^\perp$, then for every $X \in V$ and $Y \in [\lie,\lie]$, it holds that $$\langle f_j(X), Y \rangle =  \pm \langle X , f_j(Y) \rangle = 0$$ and hence $V$ is also invariant under $f_1$ and $f_2$. It suffices to show that $f_1$ and $f_2$ commute for elements in both $[\lie,\lie]$ and $V$.
	
	First consider $[X,Y] \in [\lie,\lie]$ with $X, Y \in \lie$, then $$f_1\left(f_2\left([X,Y]\right)\right) = f_1\left([f_2\left(X\right),Y]\right) = [f_2\left(X\right),f_1\left(Y\right)] = f_2\left([X,f_1\left(Y\right)]\right) = f_2\left(f_1\left([X,Y]\right)\right)$$ and thus, since the elements $[X,Y]$ span $[\lie,\lie]$, $f_1$ and $f_2$ commute on $[\lie,\lie]$. 
	
	For the subspace $V$ we prove this by contradiction. Assume that there is an $X \in V$ such that $f_1\left(f_2\left(X\right)\right) \neq f_2\left(f_1\left(X\right)\right)$, then we have $0 \neq f_1\left(f_2\left(X\right)\right) - f_2\left(f_1\left(X\right)\right) \in V$. So, since $\lie$ has no non-zero abelian factor, Lemma \ref{lem:noabelian} shows that $V \cap Z(\lie) = 0$ and thus that there exists $Y \in \lie$ with $[f_1\left(f_2\left(X\right)\right) - f_2\left(f_1\left(X\right)\right),Y] \neq 0$. This is a contradiction, since $$[f_1\left(f_2\left(X\right)\right) - f_2\left(f_1\left(X\right)\right),Y] = f_1\left(f_2\left([X,Y]\right)\right) - f_2\left(f_1\left([X,Y]\right)\right) = 0$$ by the first case. This shows that $f_1$ and $f_2$ commute on $V$ and hence on $\lie$.
\end{proof}
\noindent We need that the maps $f_1$ and $f_2$ are (skew-)symmetric in order to show that the subspace $V$ is invariant. If this assumption does not hold, one can only show that $\left(f_1 \comp f_2 - f_2 \comp f_1\right)(\lie) \subset Z(\lie)$, see Example \ref{ex:notcommute} in the next section for an example in the case of bi-invariant complex structures. 

\begin{Cor}
	\label{prop:projcommute}
	Let $p_1, p_2: \lie \to \lie$ be two orthogonal projections of a metric Lie algebra $\lie$ with no non-zero abelian factor. The maps $p_1$ and $p_2$ commute and $p_1 \comp p_2 = p_2 \comp p_1$ is an orthogonal projection on $p_1(\lie) \cap p_2(\lie)$.
\end{Cor}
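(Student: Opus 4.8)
The plan is to derive everything from Proposition \ref{prop:commutegeneral}, which applies directly since orthogonal projections satisfy condition $(i)$ of Definition \ref{def:projection} (the bracket-compatibility $[p_j(X),Y]=p_j([X,Y])$) and are symmetric by condition $(ii)$. Thus taking $f_1=p_1$, $f_2=p_2$ immediately gives $p_1\comp p_2=p_2\comp p_1$, which is the first assertion. The remaining work is to verify that this common composition $q:=p_1\comp p_2=p_2\comp p_1$ is itself an orthogonal projection and to identify its image as $p_1(\lie)\cap p_2(\lie)$.

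First I would check that $q$ is an orthogonal projection in the sense of Definition \ref{def:projection}. Idempotency is the computation $q\comp q=p_1 p_2 p_1 p_2=p_1 p_1 p_2 p_2=p_1 p_2=q$, where the middle equality uses commutativity and the outer ones use $p_j\comp p_j=p_j$. That $q$ is a morphism of Lie algebras follows from it being a composition of morphisms. Condition $(i)$ follows by applying the bracket-compatibility of $p_2$ and then of $p_1$:
\begin{align*}
q([X,Y])=p_1\bigl(p_2([X,Y])\bigr)=p_1\bigl([p_2(X),Y]\bigr)=[p_1(p_2(X)),Y]=[q(X),Y].
\end{align*}
Symmetry (condition $(ii)$) follows from the symmetry of each $p_j$: for all $X,Y$ we have $\langle q(X),Y\rangle=\langle p_2(X),p_1(Y)\rangle=\langle X,p_2(p_1(Y))\rangle=\langle X,q(Y)\rangle$, using $p_2\comp p_1=q$ in the last step.

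Next I would identify the image. The inclusion $q(\lie)\subset p_1(\lie)\cap p_2(\lie)$ is immediate: writing $q=p_1\comp p_2$ shows $q(\lie)\subset p_1(\lie)$, and writing $q=p_2\comp p_1$ shows $q(\lie)\subset p_2(\lie)$. For the reverse inclusion, take $Z\in p_1(\lie)\cap p_2(\lie)$. Since $Z\in p_2(\lie)$ and $p_2$ is idempotent, $p_2(Z)=Z$; similarly $p_1(Z)=Z$. Hence $q(Z)=p_1(p_2(Z))=p_1(Z)=Z$, so $Z\in q(\lie)$. This gives $p_1(\lie)\cap p_2(\lie)\subset q(\lie)$ and therefore equality.

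I do not expect a genuine obstacle here, as the statement is essentially a formal consequence of the already-proved Proposition \ref{prop:commutegeneral} together with routine verifications. The only point requiring mild care is ensuring that the fixed-point characterization $p_j(Z)=Z$ for $Z\in p_j(\lie)$ is used correctly: this is exactly idempotency, since any such $Z$ equals $p_j(W)$ for some $W$, whence $p_j(Z)=p_j(p_j(W))=p_j(W)=Z$. With that in hand, the identification of the image is clean and the corollary follows.
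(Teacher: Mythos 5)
Your proof is correct and takes essentially the same approach as the paper's: commutativity comes from Proposition \ref{prop:commutegeneral} via the symmetry of $p_1,p_2$, then one checks the projection axioms for $q = p_1 \comp p_2$ and identifies the image by the two inclusions, using idempotency for the fixed-point argument. The only difference is that you write out explicitly the computations (bracket-compatibility, symmetry of $q$) that the paper declares immediate.
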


\begin{proof}
The first statement follows immediately from Proposition \ref{prop:commutegeneral} and the fact that $p_1$ and $p_2$ are symmetric. For the last statement, write $p_0 = p_1 \comp p_2 = p_2\comp p_1$, then it is clear that $p_0$ is a morphism of Lie algebras and that $p_0 \comp p_0 = p_0$ since $p_1$ and $p_2$ commute. The other two conditions of Definition \ref{def:projection} follow immediately from a computation. Therefore it is left to show that the image $$p_0(\lie) = p_1(\lie) \cap p(\lie_2).$$ For this, note that the inclusion $p_0(\lie) = p_1(p_2(\lie)) = p_2(p_1(\lie))\subset p_1(\lie) \cap p_2(\lie)$ follows directly from the definition of $p_0$. For the other inclusion, assume $X \in p_1(\lie) \cap p_2(\lie)$. From $p_1 \comp p_1 = p_1$ we conclude that $p_1(X) = X$ and similarly also $p_2(X) = X$, which leads to $p_0(X)=X$ and thus $X \in p_0(\lie)$.
\end{proof}

By applying the previous results, we get the main result of this section, namely the uniqueness of the orthogonal decomposition into irreducible factors.

\begin{proof}[Proof of Theorem \ref{thm:decomposition}]
There always exist irreducible factors $\lie_j$ to write $\lie$ as an orthogonal direct sum $\lie = \displaystyle \bigoplus_{j=1}^k \lie_j$, so it suffices to show that any irreducble factor is equal to some $\lie_j$. Consider the orthogonal projections $p_j: \lie \to \lie$ onto $\lie_j = p_j(\lie)$ corresponding to the decomposition into irreducible factors, see Example \ref{ex:orthogonalprojection}. Let $q$ be any other orthogonal projection onto an irreducible factor $\lieh$, then we will show that $\lieh = \lie_j$ for some $1 \leq j \leq k$.

Consider the orthogonal projections $q_j = p_j \comp q$ onto $\lieh_j = \lieh \hspace{0.2mm} \cap \hspace{0.2mm} \lie_j$ from Proposition \ref{prop:projcommute}. The restrictions $$\restr{q_j}{\lie_j}: \lie_j \to \lie_j$$ are orthogonal projections of the irreducible Lie algebra $\lie_j$, so Corollary \ref{cor:irreducibleprojection} and Lemma \ref{lem:restriction} imply that either $\lieh_j = \lie_j$ or $\lieh_j = 0$. Since the sum $\sum\limits_{i=1}^k p_j = \I_\lie$ equals the identity map, we know that $\sum\limits_{i=1}^k q_j = q$ thus $$\lieh = q(\lie) = \sum_{i=1}^k q_j(\lie) = \lieh_1 \oplus \ldots \oplus \lieh_k.$$ Since $\lieh$ is irreducible, there exists $1 \leq j \leq k$ with $\lieh = \lieh_j$ and $\lieh_j = 0$ for $i \neq j$, implying $\lieh = \lie_j$ as we wanted.
\end{proof}

\section{Orthogonal bi-invariant complex structures}
\label{sec:main}
In this section we describe the orthogonal bi-invariant complex structures on general metric Lie algebras, showing that there are only finitely many on Lie algebras without irreducible abelian factors. The main tool is to consider the eigenspaces of the given bi-invariant complex structure in the complexification of the real Lie algebra. This means that we consider two different complex structures, one given by the original bi-invariant complex structure and a second one given on the complexification. The relation between both is the gist of the argument.

On the abelian Lie algebra $\R^2$ with the standard Euclidean metric, there exists up to sign only one orthogonal bi-invariant complex structure, given by the matrix $$J = \begin{pmatrix} 0 & -1 \\ 1 & 0 \end{pmatrix}.$$ In contrast, there are uncountably many different orthogonal bi-invariant complex structures on $\R^{2n}$ with $n > 1$.

\begin{Ex}
\label{ex:uncountable}
Consider for every $\lambda \in \R$ the orthogonal matrices $$J_\lambda = \frac{1}{\sqrt{\lambda^2 +1 }} \begin{pmatrix} 0 & 1 & -\lambda & 0 \\ -1 & 0 &0 & \lambda \\ \lambda & 0 &0 &1\\ 0 & -\lambda & -1 & 0\end{pmatrix}$$ for the standard inner product. A computation show that $J_\lambda^2 = -\I_4$, hence giving uncountably many different orthogonal bi-invariant complex structures on $\R^4$. This examples easily extend to $\R^{2n}$ for all $n > 1$. 
\end{Ex}
	\noindent Similarly, once a Lie algebra has an abelian factor of dimension $\geq 4$, it will have uncountable many different bi-invariant complex structures. Hence we will assume that $\lie$ has no non-zero abelian factors or equivalently, by Lemma \ref{lem:noabelian}, that $Z(\lie) \subset [\lie,\lie]$.

%
%

The main result of this section is a description of all bi-invariant complex structures on a real Lie algebra. If $\lie$ is a real metric Lie algebra with orthogonal bi-invariant complex structure, then by Theorem \ref{thm:decomposition} we can decompose the complex metric Lie algebra with metric $\langle \cdot, \cdot \rangle_\C$ as $$(\lie,J) = \displaystyle\bigoplus_{j=1}^k \lieh_j$$ with complex irreducible factors $\lieh_j$. Note that for every element $X, Y \in \lie$, if $\langle X, Y \rangle_\C =0$ then in particular $\langle X, Y \rangle =0$ holds as well. Hence the underlying real Lie algebras $\lie_j$ of $\lieh_j$ form factors of the real Lie algebra with metric $\langle \cdot, \cdot \rangle$, which are moreover invariant under the bi-invariant complex structure $J: \lie \to \lie$. This implies that if the Lie algebra $\lie$ is irreducible, then also $(\lie,J)$ is irreducible. The restriction of $J$ to $\lie_j$ is denoted as $J_j: \lie_j \to \lie_j$ and thus $\lieh_j = (\lie_j,J_j)$. These irreducible factors are all the information we need to determine the orthogonal bi-invariant complex structures on $\lie$.

\begin{Thm}
	\label{thm:main}
Assume that a Lie algebra $\lie$ with no non-zero abelian factor has an orthogonal bi-invariant complex structure $J: \lie \to \lie$ and that the corresponding complex Lie algebra has a metric decomposition $$(\lie,J) = \displaystyle\bigoplus_{j=1}^k \left(\lie_j,J_j\right) $$ into irreducible components as explained before the theorem. Every orthogonal bi-invariant complex structure on $\lie = \displaystyle\bigoplus_{j=1}^k \lie_j$ is of the form $\pm J_1 \oplus \ldots \oplus \pm J_k$. 
\end{Thm}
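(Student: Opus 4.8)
The plan is to reduce everything to the individual irreducible factors $\lie_j$ and there exploit Corollary \ref{cor:irreducibleprojection}. Let $J'$ be an arbitrary orthogonal bi-invariant complex structure on $\lie$. First I would record that orthogonality together with $J'^2 = -\I_\lie$ forces $J'$ to be skew-symmetric, since $\langle J'(X), Y\rangle = -\langle J'(X), J'^2(Y)\rangle = -\langle X, J'(Y)\rangle$, and that $J'$ satisfies the bracket condition $[J'(X),Y] = J'([X,Y])$ by bi-invariance. Both $J'$ and each orthogonal projection $p_j$ onto $\lie_j$ thus meet the hypotheses of Proposition \ref{prop:commutegeneral} ($p_j$ symmetric, $J'$ skew-symmetric, both compatible with the bracket), so $p_j$ and $J'$ commute. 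Consequently each factor $\lie_j = p_j(\lie)$ is $J'$-invariant, and I may write $J'_j := \restr{J'}{\lie_j}$, which is again an orthogonal bi-invariant complex structure on $\lie_j$. This decomposes the problem: it suffices to prove $J'_j = \pm J_j$ on every irreducible factor.

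Next, on a fixed $\lie_j$ I would compare $J_j$ and $J'_j$. Applying Proposition \ref{prop:commutegeneral} once more, now to the pair $(J, J')$ (both skew-symmetric and bracket-compatible), gives $J J' = J' J$, hence $J_j J'_j = J'_j J_j$. Set $P_j := J_j \comp J'_j$. Because $J_j$ and $J'_j$ commute and each squares to $-\I$, the map $P_j$ squares to $\I$; because it is a product of two commuting skew-symmetric maps it is symmetric; and a direct computation using the bracket condition for $J_j$ and $J'_j$ shows $[P_j(X),Y] = P_j([X,Y])$. Thus $p := \frac{1}{2}(\I + P_j)$ is idempotent, symmetric and bracket-compatible, so it is an orthogonal projection of $\lie_j$ in the sense of Definition \ref{def:projection}. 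Since $\lie_j$ is irreducible, Corollary \ref{cor:irreducibleprojection} forces $p = 0$ or $p = \I$, i.e.\ $P_j = \pm\I$. As $J_j^{-1} = -J_j$, the identity $J_j J'_j = \pm\I$ rearranges to $J'_j = \mp J_j$, so $J'_j = \pm J_j$ as desired. Combining over all $j$ yields $J' = \pm J_1 \oplus \ldots \oplus \pm J_k$.

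The step I expect to be the main obstacle is verifying that the eigenspace map $\frac{1}{2}(\I + P_j)$ really is an orthogonal projection, specifically that it is a morphism of Lie algebras, which is part of Definition \ref{def:projection} but is not immediate from idempotency and symmetry. I would obtain it from the bracket condition exactly as in the remarks following Definition \ref{def:projection}: anticommutativity of the bracket upgrades $[p(X),Y]=p([X,Y])$ to $[X,p(Y)]=p([X,Y])$, and then $[p(X),p(Y)] = p([X,p(Y)]) = p^2([X,Y]) = p([X,Y])$, giving the morphism property. Everything else (skew-symmetry of $J'$, the two defining conditions of an orthogonal projection, invariance of $\lie_j$) is routine once the commuting relations from Proposition \ref{prop:commutegeneral} are in hand; the genuine content is that Proposition \ref{prop:commutegeneral} applies simultaneously to the symmetric maps $p_j$ and the skew-symmetric maps $J, J'$, which is what lets me both localize to the factors $\lie_j$ and trivialize the comparison there.
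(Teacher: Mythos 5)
Your strategy is genuinely different from the paper's. The paper proves Theorem \ref{thm:main} by passing to the complexification: it decomposes $\lie^\C$ into the $\pm i$-eigenspaces of $J^\C$ (Proposition \ref{prop:antilinear}), uses complex conjugation and Corollary \ref{cor:numberofirreducible} to see that each eigenspace has exactly $k$ irreducible factors, and then bounds the number of possible eigenspace decompositions of $\tilde{J}^\C$ by $2^k$ via the uniqueness of Theorem \ref{thm:decomposition}. You instead localize to the irreducible factors and compare the two complex structures there through the symmetric involution $P_j = J_j \comp J'_j$; this is essentially the ``alternative approach'' the paper itself sketches after Theorem \ref{thm:irreducible}, and it avoids complexification, antilinear maps and eigenspace counting entirely. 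Your individual computations are all correct: skew-symmetry of $J'$, commutation of $J'$ with the projections $p_j$ and with $J$ via Proposition \ref{prop:commutegeneral}, $P_j^2 = \I$, symmetry and bracket-compatibility of $P_j$, and the verification that $p = \frac{1}{2}(\I + P_j)$ is a morphism of Lie algebras and hence an orthogonal projection in the sense of Definition \ref{def:projection}.

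There is, however, one genuine gap, at the decisive step. When you invoke Corollary \ref{cor:irreducibleprojection} to conclude $p \in \{0, \I\}$, you need $\lie_j$ to be irreducible \emph{as a real metric Lie algebra}, since your $p$ is a priori only $\R$-linear. But the hypothesis of the theorem only says that $(\lie_j, J_j)$ is irreducible as a \emph{complex} metric Lie algebra; a complex-irreducible factor could conceivably split orthogonally into real ideals that are not $J_j$-invariant. The two notions of irreducibility do coincide here, but that equivalence is exactly the content of Theorem \ref{thm:irreducible} (proved in the paper from Proposition \ref{prop:commutegeneral} and the uniqueness theorem), not something available for free from the hypothesis. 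The quickest repair staying inside your argument: since $J_j$ and $J'_j$ commute, $P_j$ commutes with $J_j$, so $p$ is $\C$-linear on $(\lie_j, J_j)$; moreover $p$ is symmetric for the Hermitian inner product $\langle \cdot, \cdot \rangle_\C$, because $\langle p(X), Y \rangle_\C = \frac{1}{2}\left(\langle p(X), Y \rangle + i \langle p(X), J_j Y \rangle\right) = \frac{1}{2}\left(\langle X, p(Y) \rangle + i \langle X, J_j p(Y) \rangle\right) = \langle X, p(Y) \rangle_\C$, using symmetry for $\langle \cdot,\cdot\rangle$ and $p \comp J_j = J_j \comp p$. Hence $p$ is an orthogonal projection of the complex metric Lie algebra $(\lie_j, J_j)$, and Corollary \ref{cor:irreducibleprojection} --- which, as Section \ref{sec:decomp} stresses, holds over any subfield of $\C$ --- applies to the complex-irreducible $(\lie_j,J_j)$ and yields $P_j = \pm \I$. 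With that one additional observation (or, alternatively, by citing Theorem \ref{thm:irreducible}, whose proof does not rely on Theorem \ref{thm:main}), your proof is complete.
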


It is clear that all the maps $\pm J_1 \oplus \ldots \oplus \pm J_k$ are indeed orthogonal bi-invariant complex structures, so the true statement of the theorem is that there are no others. The main idea will be to consider the complexification of the Lie algebra and decompose it into the eigenspaces of the map $J$, which will form orthogonal factors of $\lie^\C$. Finally, using the fact that two different orthogonal bi-invariant complex structures always commute, we prove Theorem \ref{thm:main}.

\paragraph{The complexification $\lie^\C$}

\

First, we relate the complexification of a metric Lie algebra to the complex Lie algebra given by an orthogonal bi-invariant complex structure. 
\begin{Prop}
	\label{prop:uniquedecomposition}
	Let $\lie$ be a real metric Lie algebra with orthogonal bi-invariant complex structure $J: \lie \to \lie$. The complexification $\lie^\C$ is isometric as a Lie algebra to $(\lie,J) \oplus (\lie,-J)$.
\end{Prop}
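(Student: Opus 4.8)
The plan is to diagonalize the complexified structure $J^\C$ on $\lie^\C$ and to identify its two eigenspaces with the two complex Lie algebras $(\lie,J)$ and $(\lie,-J)$. Since $J^2 = -\I_\lie$, the $\C$-linear extension satisfies $(J^\C)^2 = -\I_{\lie^\C}$, so $J^\C$ has only the eigenvalues $\pm i$ and $\lie^\C = \lie^\C_+ \oplus \lie^\C_-$ splits into the eigenspaces $\lie^\C_\pm = \ker(J^\C \mp i\,\I_{\lie^\C})$. First I would check that this is a decomposition of Lie algebras: from the bi-invariance $J^\C([Z,W]) = [J^\C(Z),W] = [Z,J^\C(W)]$ one reads off that each $\lie^\C_\pm$ is an ideal and that $[\lie^\C_+,\lie^\C_-] \subset \lie^\C_+ \cap \lie^\C_- = 0$, so the bracket respects the splitting.

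Next I would write down the explicit identifications. Define $\phi_\pm \colon \lie \to \lie^\C$ by $\phi_\pm(X) = \frac{1}{2}\bigl(X \mp i J^\C(X)\bigr)$; a direct check using $(J^\C)^2 = -\I_{\lie^\C}$ shows that $\phi_\pm(X) \in \lie^\C_\pm$ and that $\phi_\pm(\pm J(X)) = i\,\phi_\pm(X)$, so that $\phi_+$ is $\C$-linear as a map $(\lie,J) \to \lie^\C_+$ and $\phi_-$ is $\C$-linear as a map $(\lie,-J) \to \lie^\C_-$. Injectivity is immediate (the real and imaginary parts must vanish separately) and a dimension count gives bijectivity. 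That each $\phi_\pm$ is a homomorphism of Lie algebras follows from expanding $[\phi_\pm(X),\phi_\pm(Y)]$ and repeatedly applying the bi-invariance relations $[JX,Y] = [X,JY] = J[X,Y]$ together with $[JX,JY] = -[X,Y]$.

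The remaining and most delicate point is the metric statement. I would compute $\langle \phi_\pm(X),\phi_\pm(Y)\rangle^\C$ directly, being careful that $\langle\cdot,\cdot\rangle^\C$ is linear in the first and antilinear in the second slot and restricts to $\langle\cdot,\cdot\rangle$ on $\lie$. Here one uses that orthogonality of $J$ together with $J^2 = -\I_\lie$ forces $J$ to be skew-symmetric, $\langle JX,Y\rangle = -\langle X,JY\rangle$. Feeding this in, the cross terms combine to give $\langle \phi_+(X),\phi_+(Y)\rangle^\C = \frac{1}{2}\bigl(\langle X,Y\rangle + i\langle X,JY\rangle\bigr) = \langle X,Y\rangle_\C$, while the analogous computation yields $\langle \phi_-(X),\phi_-(Y)\rangle^\C = \frac{1}{2}\bigl(\langle X,Y\rangle - i\langle X,JY\rangle\bigr)$, which is exactly the Hermitian form of $(\lie,-J)$, and $\langle \phi_+(X),\phi_-(Y)\rangle^\C = 0$, so the two eigenspaces are orthogonal. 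The main obstacle I anticipate is purely bookkeeping: these identities hold on the nose only because of the normalizing factor $\tfrac12$ in the definition of $\langle\cdot,\cdot\rangle_\C$ and the chosen (anti)linearity convention, so the chief risk is a sign or conjugation error rather than a conceptual gap. Assembling the three pieces---the eigenspaces are orthogonal ideals, and $\phi_+$, $\phi_-$ are isometric complex Lie algebra isomorphisms onto them---yields the claimed isometry $\lie^\C \cong (\lie,J) \oplus (\lie,-J)$.
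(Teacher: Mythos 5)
Your proof is correct, and it takes a genuinely different route from the paper's. The paper argues from the outside: it defines the diagonal map $\varphi\colon\lie\to(\lie,J)\oplus(\lie,-J)$, $\varphi(X)=(X,X)$, which clearly preserves the Lie bracket, checks that it preserves the inner product --- a computation in which the cross terms $i\langle X,J(Y)\rangle$ and $i\langle X,-J(Y)\rangle$ cancel outright, so only the normalizing factor $\tfrac{1}{2}$ matters there --- and then shows that $\R$-linearly independent vectors are sent to $\C$-linearly independent vectors, so that the induced $\C$-linear map $\lie^\C\to(\lie,J)\oplus(\lie,-J)$ is an isomorphism by comparing complex dimensions. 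You argue from the inside: you diagonalize $J^\C$ on $\lie^\C$, show that the eigenspaces $\lie^\C_\pm$ are orthogonal ideals, and construct explicit $\C$-linear isometric isomorphisms $\phi_\pm(X)=\tfrac{1}{2}\bigl(X\mp iJ^\C(X)\bigr)$ from $(\lie,\pm J)$ onto them. The two constructions are mutually inverse in a precise sense: the complexification of $\varphi$ composed with your $\phi_+$ is the inclusion $X\mapsto(X,0)$, and similarly for $\phi_-$. What your route buys: it establishes along the way the content of Proposition \ref{prop:antilinear} (the orthogonal, $J^\C$-invariant eigenspace decomposition of $\lie^\C$, with complex conjugation swapping the two pieces), which the paper proves separately, and it exhibits the isometry completely explicitly. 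What the paper's route buys: a lighter verification load --- one map, one bracket check, and an inner-product computation that does not even need skew-symmetry of $J$, whereas you must verify that $\phi_\pm$ are Lie algebra homomorphisms (via the identity $[J(X),J(Y)]=-[X,Y]$, i.e.\ bi-invariance applied twice) and your metric computation genuinely uses skew-symmetry. Your closing remark is on point: in both arguments the factor $\tfrac{1}{2}$ in $\langle\cdot,\cdot\rangle_\C$ and the convention of linearity in the first slot are exactly what make the identities hold on the nose.
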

\noindent Isometric as a Lie algebra means that there is a isomorphism of Lie algebras which also preserves the inner product. Recall that $(\lie,J)$ is equipped with the inner product described on page \pageref{definitionofinneronC}, depending on the map $J$.

\begin{proof}
	Write the inner product on $(\lie,J)$ as $\langle\cdot,\cdot\rangle_1$ and on $(\lie,-J)$ as $\langle \cdot,\cdot\rangle_2$. The inner product on the direct sum is denoted as $\langle \cdot,\cdot\rangle_\C$. We consider the injective map $$\varphi: \lie \to (\lie,J) \oplus (\lie,-J)$$ given by $\varphi(X) = (X,X)$, which preserves the Lie bracket. The map $\varphi$ preserves the inner product because 
	\begin{align*}\langle \varphi(X),\varphi(Y) \rangle_\C = \langle (X,X),(Y,Y) \rangle_\C &= \langle X, Y \rangle_1 + \langle X, Y \rangle_2 \\ &= \frac{\langle X, Y \rangle + i \langle X, J(Y) \rangle + \langle X, Y \rangle +i \langle X, -J(Y) \rangle  }{2} \\ &= \langle X, Y \rangle.\end{align*}
	
	Because $\lie^\C$ and $(\lie,J) \oplus (\lie,-J)$ have the same complex dimension, it suffices to show that $\varphi$ maps elements in $\lie$ which are linearly independent over $\R$ to elements in $(\lie,J) \oplus (\lie,-J)$ which are linearly independent over the complex numbers. For this, take $X_1, \ldots, X_n \in \lie$ which are linearly independent, and consider a linear combination $$0 = \sum_{j=1}^n \lambda_j \varphi(X_j) = \sum_{j=1}^n\left( \lambda_j X_j, \lambda_j X_j\right)$$ with $\lambda_j = a_j+ b_j i \in \C$. In the first component, this corresponds to $$\sum_{j=1}^n a_j X_j + b_j J(X_j) = 0,$$ in the second component to $$\sum_{j=1}^n a_j X_j - b_j J(X_j) = 0.$$ By adding and subtracting these two equations, using that $J$ is an isomorphism, we get that $2 \displaystyle \sum_{j=1}^n a_j X_j = 0 = 2 \displaystyle \sum_{j=1}^n b_j X_j$ and thus $a_j = b_j = 0$ for all $1 \leq j \leq n$. This gives us the result of the theorem.
\end{proof}

As a consequence, we can compute the number of irreducible factors of the complexification of a metric Lie algebra.
\begin{Cor}
	\label{cor:numberofirreducible}
	Let $\lie$ be a metric Lie algebra with an orthogonal bi-invariant complex structure $J$. If $(\lie,J)$ has $k$ irreducible factors, then $\lie^\C$ will have exactly $2k$ irreducible factors.	
\end{Cor}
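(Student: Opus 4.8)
The plan is to combine Proposition \ref{prop:uniquedecomposition} with the uniqueness of the orthogonal decomposition from Theorem \ref{thm:decomposition}. Proposition \ref{prop:uniquedecomposition} identifies $\lie^\C$, as a metric Lie algebra, with the orthogonal direct sum $(\lie,J) \oplus (\lie,-J)$. Since $(\lie,J)$ has $k$ irreducible factors by hypothesis, it suffices to show that $(\lie,-J)$ also has exactly $k$ irreducible factors, and then that the number of irreducible factors is additive under orthogonal direct sums.

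For the first point, I would observe that $(\lie,J)$ and $(\lie,-J)$ share the same underlying real Lie algebra $\lie$ and the same real inner product, so a real subspace $W \subset \lie$ is $J$-invariant if and only if it is $(-J)$-invariant, and the property of being an ideal does not involve $J$ at all. The only $J$-dependent ingredient is the Hermitian form, but the two forms are complex conjugates of one another, namely $\langle \cdot, \cdot \rangle_2 = \overline{\langle \cdot, \cdot \rangle_1}$, so they have exactly the same zero set. Consequently a subspace is a factor of $(\lie,J)$ precisely when it is a factor of $(\lie,-J)$; equivalently, the identity map $\lie \to \lie$ is a bracket-preserving, conjugate-linear isometry from $(\lie,J)$ onto $(\lie,-J)$, carrying irreducible factors to irreducible factors. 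In particular $(\lie,-J)$ also decomposes into $k$ irreducible factors.

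Next I would assemble the irreducible decompositions $(\lie,J) = \bigoplus_{j=1}^k \lie_j$ and $(\lie,-J) = \bigoplus_{j=1}^k \lie_j'$ into a single orthogonal decomposition of $(\lie,J) \oplus (\lie,-J)$ with $2k$ summands, each of which remains an irreducible factor of the whole (a factor contained in a single summand is already a factor of that summand, so irreducibility is inherited). Transporting this back through the isometry of Proposition \ref{prop:uniquedecomposition} gives an orthogonal decomposition of $\lie^\C$ into $2k$ irreducible factors, and the uniqueness statement of Theorem \ref{thm:decomposition} forces this number to be exactly $2k$.

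The main obstacle is that Theorem \ref{thm:decomposition} requires $\lie^\C$ to have no non-zero abelian factor, so before applying it I must check that this hypothesis is inherited from $\lie$. This follows from Lemma \ref{lem:noabelian} together with the fact that complexification commutes with both the center and the commutator subalgebra, i.e.\ $Z(\lie^\C) = Z(\lie)^\C$ and $[\lie^\C,\lie^\C] = [\lie,\lie]^\C$, so that $Z(\lie) \subset [\lie,\lie]$ forces $Z(\lie^\C) \subset [\lie^\C,\lie^\C]$. The secondary subtlety, already addressed above, is the $J$-dependence of the Hermitian form emphasized after its definition: one must verify that passing from $J$ to $-J$ merely conjugates the form and therefore preserves orthogonality, which is precisely what makes the count for $(\lie,-J)$ agree with that for $(\lie,J)$.
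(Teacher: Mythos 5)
Your proof is correct and takes essentially the same route as the paper: identify $\lie^\C$ with $(\lie,J) \oplus (\lie,-J)$ via Proposition \ref{prop:uniquedecomposition} and observe that the irreducible decomposition $(\lie_j,J_j)$ of $(\lie,J)$ yields the irreducible decomposition $(\lie_j,-J_j)$ of $(\lie,-J)$, so that the sum has $2k$ irreducible factors. The only difference is that you spell out details the paper leaves implicit, namely that the two Hermitian forms are complex conjugates of one another, that $\lie^\C$ inherits the absence of non-zero abelian factors, and that the count of \emph{exactly} $2k$ rests on the uniqueness statement of Theorem \ref{thm:decomposition}.
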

\begin{proof}
It is clear that if the orthogonal decomposition of $(\lie,J)$ is given by $$(\lie,J) = (\lie_1,J_1) \oplus \ldots \oplus (\lie_k,J_k),$$ then the orthogonal decomposition of $(\lie,-J)$ is given by $$(\lie,-J) = (\lie_1,-J_1) \oplus \ldots \oplus (\lie_k,-J_k).$$ The statement hence follows from Theorem \ref{prop:uniquedecomposition}.
\end{proof}

\paragraph{Diagonalizing $J^\C$}

\

For the next step, we have to recall some notions of \cite{dere19-2}, which we only use in the special case of the extension $\R \subset \C$. If $V$ and $W$ are complex vector spaces, we call a map $\varphi: V \to W$ antilinear if $\varphi(\lambda X + \mu Y) = \overline{\lambda} \varphi(X) +  \overline{\mu} \varphi(Y)$ for all $X, Y \in V$. If $\lie$ and $\lieh$ are Lie algebras over $\C$, we call $\varphi: \lie \to \lieh$ an antilinear morphism if it is both antilinear and it preserves the Lie bracket, i.e.~$$\varphi([X,Y]) = [\varphi(X),\varphi(Y)].$$
If $\varphi$ is moreover a bijection, we will call it an antilinear isomorphism. 

The main example of antilinear isomorphisms comes from complex conjugation on the complexification $\lie^\C$. 

\begin{Ex}
\label{ex:antilinear}
Let $\lie$ be a real Lie algebra with complexification $\lie^\C$. Give a basis $X_1, \ldots, X_n$ for $\lie$ we can define an antilinear map $\sigma: \lie^\C \to \lie^\C$ as $$\sigma\left( \sum_{j=1}^n \lambda_j X_j \right) = \sum_{j=1}^n \overline{\lambda_j} X_j.$$ This map does not depend on the choice of the basis $X_j$ for $\lie$ and is a bijection on $\lie$. Obviously it respects the Lie bracket on elements in $\lie \subset \lie^\C$ since it is the identity map there. Now for every $Y = \displaystyle \sum_{j=1}^n \lambda_j X_j$ and $Z = \displaystyle \sum_{l=1}^n \mu_l X_l$ in $\lie^\C$, we have that $$\sigma([Y,Z]) = \sum_{j=1}^n \sum_{l=1}^n \overline{\lambda_j \mu_l} \sigma([X_j,X_l]) =\sum_{j=1}^n \sum_{l=1}^n \overline{\lambda_j \mu_l} [\sigma(X_j),\sigma(X_l)] = [\sigma(Y),\sigma(Z)]$$ and thus $\sigma$ preserves the bracket on $\lie^\C$. We conclude that $\sigma$ is an antilinear isomorphism of $\lie^\C$, which we will call the complex conjugation map on $\lie^\C$. Moreover, for every complex subalgebra $\lieh \subset \lie^\C$, it holds that $\sigma$ induces an antilinear isomorphism $\lieh \to \sigma(\lieh)$. In case $f^\C$ is the extension of a linear map $f: \lie \to \lie$, then $\sigma \comp f^\C = f^\C \comp \sigma$. 
 
Assume that $\lie$ is equipped with an inner product $\langle\cdot, \cdot\rangle$, which can be extended to $\langle \cdot,\cdot \rangle^\C$ on $\lie^\C$ as described in Section \ref{sec:prel}. A computation shows that for every $Y, Z \in \lie^\C$, it holds that $$\langle \sigma(Y),\sigma(Z)\rangle^\C= \overline{\langle Y, Z \rangle^\C}.$$ Therefore $\sigma$ maps any orthogonal decomposition of a subspace $\lieh$ to an orthogonal decomposition of $\sigma(\lieh)$ and hence $\lieh$ and $\sigma(\lieh)$ have the same number of irreducible factors.
\end{Ex}
Vice versa, we remark that every antilinear isomorphism can be described as in Example \ref{ex:antilinear}. Assume that $\varphi: \lie_1 \to \lie_2$ is an antilinear isomorphism of complex Lie algebras. Consider the complex Lie algebra $\lie_1 \oplus \lie_2$ which contains the subset $$\lie = \{(X,\varphi(X) \mid X \in \lie_1\}$$ considered as a real vector space. Note that $[\lie,\lie] \subset \lie$ because $\varphi$ preserves the Lie bracket, so $\lie$ is in fact a real Lie algebra. From \cite[Theorem 4.1.]{dere19-2} it follows that $\lie^\C = \lie_1 \oplus \lie_2$. The map $\sigma: \lie^\C \to \lie^\C$ given by $\sigma(X,Y) = (\varphi^{-1}(Y),\varphi(X))$ is an antilinear bijection which preserves the Lie bracket. Since $\restr{\sigma}{\lie} = \I_{\lie}$, it is thus equal to the complex conjugation map as in Example \ref{ex:antilinear} above. We conclude that every antilinear isomorphism $\varphi: \lie_1 \to \lie_2$ can be considered as complex conjugation in some complexification of a real Lie algebra. as in Example \ref{ex:antilinear}.

%
On the complexification $\lie^\C$, every semisimple map can be diagonalized. In the case of the complexification of an orthogonal bi-invariant complex structures, this leads to an orthogonal decomposition of the Lie algebra.
\begin{Prop}
	\label{prop:antilinear}
	Let $\lie$ be a metric real Lie algebra with an orthogonal bi-invariant complex structure $J$. The complexification $\lie^\C$ has an orthogonal decomposition $\lie^\C = \lie_1 \oplus \lie_{-1}$ which is $J^\C$-invariant and such that $\restr{J^\C}{\lie_1} = i \I_{\lie_1}$ and $\restr{J^\C}{\lie_{-1}} = - i \I_{\lie_{-1}}$. Moreover, complex conjugation on $\lie^\C$ gives an antilinear isomorphism between $\lie_1$ and $\lie_{-1}$. 
\end{Prop}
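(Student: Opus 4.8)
The plan is to diagonalize the $\C$-linear extension $J^\C$ on $\lie^\C$. Since $J^2 = -\I_\lie$, the extension satisfies $\left(J^\C\right)^2 = -\I_{\lie^\C}$, so the minimal polynomial of $J^\C$ divides $t^2+1 = (t-i)(t+i)$, which splits into distinct linear factors over $\C$. Hence $J^\C$ is semisimple with eigenvalues among $\{i,-i\}$, and I would define $\lie_1 = \ker\left(J^\C - i\I\right)$ and $\lie_{-1} = \ker\left(J^\C + i\I\right)$, giving the vector-space decomposition $\lie^\C = \lie_1 \oplus \lie_{-1}$ with the stated restrictions $\restr{J^\C}{\lie_1} = i\I_{\lie_1}$ and $\restr{J^\C}{\lie_{-1}} = -i\I_{\lie_{-1}}$ immediate from the definition. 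Invariance under $J^\C$ is automatic since eigenspaces are preserved by the operator itself.

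Next I would check that $\lie_1$ and $\lie_{-1}$ are ideals, i.e.\ that this is a decomposition of Lie algebras and not merely of vector spaces. The bi-invariance $J([X,Y]) = [J(X),Y]$ extends $\C$-linearly to $J^\C([Y,Z]) = [J^\C(Y),Z]$ for all $Y,Z \in \lie^\C$. Thus for $Y \in \lie_1$ and arbitrary $Z$, one computes $J^\C([Y,Z]) = [J^\C(Y),Z] = [iY,Z] = i[Y,Z]$, so $[Y,Z] \in \lie_1$; the same argument with the eigenvalue $-i$ shows $\lie_{-1}$ is an ideal. This makes $\lie^\C = \lie_1 \oplus \lie_{-1}$ a direct sum of ideals, hence a Lie-algebra decomposition.

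For orthogonality, I would use that $J^\C$ is an isometry for $\langle\cdot,\cdot\rangle^\C$: the orthogonality of $J$ on $\lie$ passes to the Hermitian extension on $\lie^\C$. The standard eigenspace argument for a unitary (or any isometric) operator then applies: for $Y \in \lie_1$ and $W \in \lie_{-1}$, compare $\langle J^\C Y, J^\C W\rangle^\C$ with $\langle Y, W\rangle^\C$. Since the form is antilinear in the second slot, $\langle J^\C Y, J^\C W\rangle^\C = \langle iY, -iW\rangle^\C = i\,\overline{(-i)}\,\langle Y,W\rangle^\C = i\cdot i \langle Y,W\rangle^\C = -\langle Y,W\rangle^\C$, while isometry forces this to equal $\langle Y,W\rangle^\C$; hence $\langle Y,W\rangle^\C = 0$ and the two eigenspaces are orthogonal. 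The sign bookkeeping here, governed by the antilinearity convention, is the step I expect to require the most care, so I would state the convention explicitly before the computation.

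Finally, for the antilinear isomorphism I would invoke the complex conjugation map $\sigma$ of Example \ref{ex:antilinear}. Because $J$ is the extension of a real map, $\sigma \comp J^\C = J^\C \comp \sigma$, so if $Y \in \lie_1$ then $J^\C(\sigma(Y)) = \sigma(J^\C Y) = \sigma(iY) = -i\,\sigma(Y)$, showing $\sigma(\lie_1) \subset \lie_{-1}$; symmetrically $\sigma(\lie_{-1}) \subset \lie_1$, and since $\sigma$ is an involutive bijection these are equalities. As $\sigma$ is an antilinear isomorphism of $\lie^\C$ preserving the Lie bracket, its restriction gives the desired antilinear isomorphism between $\lie_1$ and $\lie_{-1}$, completing the proof.
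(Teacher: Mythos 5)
Your proof is correct and follows essentially the same route as the paper: diagonalize $J^\C$ into its $\pm i$-eigenspaces, check these are orthogonal ideals, and use the complex conjugation map to get the antilinear isomorphism. The only cosmetic difference is that you derive orthogonality of the eigenspaces from the unitarity of $J^\C$, whereas the paper uses its skew-symmetry; both properties of $J$ pass to the Hermitian extension and yield the same one-line computation.
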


\begin{proof}
	The map $J$ has finite order, so it is semisimple and hence the complexification $J^\C$ is diagonalizable. Since $J^2 = -\I_{\lie}$, it can only have eigenvalues $i$ and $-i$. Write $\lie_1$ for the eigenspace for eigenvalue $i$ and $\lie_{-1}$ for the eigenspace for eigenvalue $-i$, then we have $\lie^\C = \lie_1 \oplus \lie_{-1}$ as a vector space. We have to show that this is an orthogonal decomposition of ideals and that complex conjugation gives an antilinear isomorphism. 
	
	Take $X \in \lie_1, Y \in \lie_{-1}$, then using that $J^\C$ is skew-symmetric, we get $$\langle X, Y \rangle^\C = -i \langle i X,Y \rangle^\C =  -i \langle J^\C(X),Y \rangle^\C = i \langle X, J^\C(Y) \rangle^\C = i \langle X, -i Y \rangle^\C   = i^2 \langle X, Y \rangle^\C = - \langle X, Y \rangle^\C$$  and hence $\langle X, Y \rangle^\C = 0$ showing that the decomposition is orthogonal. To show that $\lie_1$ is an ideal, we have for all $X \in \lie_1, Y \in \lie$ that $$J^\C([X,Y]) = [J^\C(X),Y] = i [X,Y]$$ and thus $[X,Y] \in \lie_1$. We conclude that $\lie_1$ is an ideal and exactly the same argument shows $\lie_{-1}$ is an ideal as well.
	
	Consider the complex conjugation map $\sigma: \lie^\C \to \lie^\C$ which commutes with $J^\C$. If $X \in \lie_1$, then $J^\C(X) = i X$ and thus $$J^\C(\sigma(X)) = \sigma(J^\C(X)) = \sigma(iX) = -i \sigma(X),$$ showing that $\sigma(X) \in \lie_{-1}$. We conclude that $\sigma(\lie_1) \subset \lie_{-1}$ and similarly $\sigma(\lie_{-1}) \subset \lie_1$. This gives the last statement of the proposition, since $\sigma$ is a bijection.
\end{proof}

\paragraph{Orthogonal bi-invariant complex structures commute}

\

To compare different decompositions into eigenspaces, we will use that different orthogonal bi-invariant complex structures always commute if there are no abelian factors.
\begin{Prop}
	\label{prop:commute}
	Let  $\lie$ be a metric real Lie algebra with no non-zero abelian factor. If $J_1, J_2: \lie \to \lie$ are two orthogonal bi-invariant complex structures, then the linear maps $J_1$ and $J_2$ commute. 
\end{Prop}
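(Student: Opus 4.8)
The plan is to deduce the statement directly from Proposition \ref{prop:commutegeneral}, which already carries the entire argument: it shows that two linear maps on a metric Lie algebra with no non-zero abelian factor commute as soon as (a) each satisfies the bi-invariance bracket identity $[f_j(X),Y] = f_j([X,Y])$, and (b) each is symmetric or skew-symmetric. So the only work is to confirm that $J_1$ and $J_2$ meet these two hypotheses, after which the conclusion $J_1 J_2 = J_2 J_1$ follows by setting $f_1 = J_1$ and $f_2 = J_2$.

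First I would note that the bracket condition is nothing but the definition of a bi-invariant complex structure, namely $J_j([X,Y]) = [J_j(X),Y]$ for all $X, Y \in \lie$, so it holds for both $J_1$ and $J_2$ by hypothesis. This requires no computation beyond citing the defining property.

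The second point, that each $J_j$ is skew-symmetric, is where the two defining features of an orthogonal complex structure must be combined. Orthogonality gives $\langle J_j(X), J_j(Y) \rangle = \langle X, Y \rangle$, while the relation $J_j^2 = -\I_\lie$ lets me write $Y = -J_j^2(Y) = J_j(-J_j(Y))$. Substituting this into the orthogonality identity yields $\langle J_j(X), Y \rangle = \langle J_j(X), J_j(-J_j(Y)) \rangle = \langle X, -J_j(Y) \rangle = -\langle X, J_j(Y) \rangle$, so $J_j$ is skew-symmetric. With both hypotheses verified, Proposition \ref{prop:commutegeneral} applies and gives the desired commutativity.

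I do not expect any genuine obstacle here: the entire difficulty is contained in Proposition \ref{prop:commutegeneral}, and the present statement is essentially an immediate corollary. The only step that calls for a moment of care is the skew-symmetry computation, where one must remember to invoke $J_j^2 = -\I_\lie$ in order to turn the orthogonality relation (a statement about $\langle J_j(X), J_j(Y) \rangle$) into the required statement about $\langle J_j(X), Y \rangle$; without that substitution the skew-symmetry is not visible directly from orthogonality alone.
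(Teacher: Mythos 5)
Your proposal is correct and follows essentially the same route as the paper, which also deduces the result immediately from Proposition \ref{prop:commutegeneral} together with skew-symmetry of orthogonal bi-invariant complex structures. The only difference is that you spell out the derivation of skew-symmetry from orthogonality and $J_j^2 = -\I_\lie$, a step the paper compresses into the phrase ``skew-symmetric by definition''; your version is if anything slightly more careful.
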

\begin{proof}
	This follows immediately from Proposition \ref{prop:commutegeneral} and the fact that orthogonal bi-invariant complex structures are skew-symmetric by definition.
\end{proof}

In the proof of Proposition \ref{prop:commute}, the assumption that $J_1$ and $J_2$ are orthogonal is crucial for the existence of an invariant subspace $V$ in the proof. If we omit this condition, we can only show that $\left(J_1\comp J_2 - J_2 \comp J_1\right)(\lie) \subset Z(\lie)$. We give a concrete example to illustrate the necessity of this assumption.

\begin{Ex}
	\label{ex:notcommute}
	Consider the $6$-dimensional nilpotent Lie algebra $\lie$ with basis $X_1, \ldots, X_6$ and brackets 
	\begin{align*}
	&[X_1,X_3] = X_5&
	&[X_1,X_4] = X_6&\\
	&[X_2,X_3] = X_6& 
	&[X_2,X_4] = -X_5&.
	\end{align*}
	A direct computation shows that the linear map $J_1: \lie \to \lie$ given by 
	\begin{align*}
	&J_1(X_1) = X_2& 
	&J_1(X_2) = -X_1&\\
	&J_1(X_3) = X_4&
	&J_1(X_4) = -X_3&\\
	&J_1(X_5) = X_6&
	&J_1(X_6) = - X_5&
	\end{align*} is a bi-invariant complex structure, making $(\lie,J_1)$ into the complex Heisenberg algebra $\lieh_3(\C)$. A second bi-invariant complex structure $J_2: \lie \to \lie$ is given by 
	\begin{align*}
	&J_2(X_1) = X_2 + X_6& 
	&J_2(X_2) = -X_1 + X_5&\\
	&J_2(X_3) = X_4&
	&J_2(X_4) = -X_3&\\
	&J_2(X_5) = X_6&
	&J_2(X_6) = - X_5&.
	\end{align*} Since $J_1(J_2(X_1)) = -X_1 - X_5$ and $J_2(J_1(X_1)) = -X_1 + X_5$, the linear maps $J_1$ and $J_2$ do not commute. By using Proposition \ref{prop:commute} we conclude that there does not exist a metric on $\lie$ which makes both $J_1$ and $J_2$ orthogonal.
\end{Ex}

With these tools we are ready for the proof of the main result in this section.

\begin{proof}[Proof of Theorem \ref{thm:main}]
As explained before, the maps of the form $\pm J_1 \oplus \ldots \oplus \pm J_k$ are indeed orthogonal bi-invariant complex structures on $\lie$. It suffices to show that there cannot be more that $2^k$ orthogonal bi-invariant complex structures.

Assume that $\tilde{J}$ is an orthogonal bi-invariant complex structure on $\lie$. By Proposition \ref{prop:commute} we know that $\tilde{J}$ and $J$ commute. Consider the complexification $\lie^\C$ and the decomposition $\lie^\C = \lie_1 \oplus \lie_{-1}$ given by Proposition \ref{prop:antilinear} for the map $J$. The number of irreducible factors of $\lie_1$ and $\lie_{-1}$ is the same and thus equal to $k$ by Corollary \ref{cor:numberofirreducible}. Since $\tilde{J}$ and $J$ commute, we know that $\tilde{J}^\C(\lie_1) = \lie_1$. Because $\sigma(\lie_1) = \lie_{-1}$ and $\sigma \tilde{J}^\C \sigma^{-1} = \tilde{J}^\C$, the map $\tilde{J}^\C$ is completely determined by its restriction to $\lie_1$, so it suffices to show that there are at most $2^k$ possibilities for $\restr{\tilde{J}^\C}{\lie_1}$.

The map $\tilde{J}^\C$ is semisimple and can only have eigenvalues $\pm i$. Just as in Proposition \ref{prop:antilinear}, the eigenspaces for eigenvalue $i$ and $-i$ give an orthogonal decomposition of $\lie_1$. Each of these eigenspaces consists of a number of irreducible factors of $\lie_1$ by the uniqueness of Theorem \ref{thm:decomposition}. Since the complex Lie algebra $\lie_1$ has exactly $k$ irreducible factors, there are exactly $2^k$ possibilities for such a decomposition into eigenspaces. Hence there are at most $2^k$ possibilities for the map $\tilde{J}^\C$ as well, giving the statement of the theorem.
\end{proof}

In contract to the abelian case, see Example \ref{ex:uncountable}, most metric Lie algebras have only finitely many orthogonal bi-invariant complex structures.
\begin{Cor}
	\label{cor:finitenumber}
	Any metric Lie algebra with no abelian irreducible factor has only a finite number of orthogonal bi-invariant complex structures.
\end{Cor}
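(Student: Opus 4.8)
The plan is to obtain this corollary as an essentially immediate consequence of Theorem \ref{thm:main}, with the only genuine preliminary being a reconciliation of hypotheses. Theorem \ref{thm:main} is phrased under the assumption that $\lie$ has no non-zero abelian factor, whereas the corollary assumes no abelian \emph{irreducible} factor, so I would first record that these two conditions coincide. Indeed, any abelian irreducible factor is in particular a non-zero abelian factor; conversely, if $\lie = \lieh \oplus \mathfrak{a}$ with $\mathfrak{a} \neq 0$ abelian, then splitting $\mathfrak{a}$ into one-dimensional orthogonal summands exhibits abelian irreducible factors. (Equivalently, one may phrase this via $Z(\lie) \subset [\lie,\lie]$ using Lemma \ref{lem:noabelian}.) Hence the full strength of the results of Section \ref{sec:main} applies directly.

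With the hypotheses aligned, I would argue by a trivial case split. If $\lie$ admits no orthogonal bi-invariant complex structure at all, then the number of such structures is $0$, which is finite and requires no further argument. Otherwise, fix one orthogonal bi-invariant complex structure $J$ and use the metric decomposition $(\lie,J) = \bigoplus_{j=1}^k (\lie_j, J_j)$ into irreducible components guaranteed by Theorem \ref{thm:decomposition}. Theorem \ref{thm:main} then states that every orthogonal bi-invariant complex structure on $\lie$ is of the form $\pm J_1 \oplus \ldots \oplus \pm J_k$, so there are at most $2^k$ of them. Since $\lie$ is finite-dimensional, $k$ is finite, and therefore the total count is bounded by $2^k < \infty$.

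I do not expect any substantial obstacle here: all the difficulty has already been absorbed into Theorem \ref{thm:main} (and, behind it, Proposition \ref{prop:commute}, Proposition \ref{prop:antilinear}, and the uniqueness in Theorem \ref{thm:decomposition}). The corollary is best viewed as a qualitative restatement of Theorem \ref{thm:main} that highlights finiteness, contrasting sharply with the abelian case of Example \ref{ex:uncountable}, where the decomposition into irreducible factors fails to be unique and a continuum of structures appears. The only points meriting an explicit sentence are the equivalence of the two abelian-factor hypotheses and the handling of the degenerate case in which no complex structure exists.
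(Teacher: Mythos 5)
Your proposal is correct and follows exactly the route the paper intends: the corollary is stated as an immediate consequence of Theorem \ref{thm:main}, via the trivial case split between zero structures and the bound $2^k$ obtained after fixing one structure $J$. Your reconciliation of ``no abelian irreducible factor'' with ``no non-zero abelian factor'' (by splitting an abelian factor into one-dimensional orthogonal ideals) is a worthwhile explicit remark, but it does not change the substance of the argument.
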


\noindent Moreover, if the number is non-zero and given one orthogonal bi-invariant complex structure on a metric Lie algebra, the exact number is $2^k$ with $k$ the number of irreducible factors of the corresponding complex Lie algebra.

As another corollary, we get the generalization of \cite[Proposition 4.9.]{dbm19-1}  from $2$-step nilpotent Lie algebras to general Lie algebras.

\begin{Cor}
\label{cor:numberonirreducible}
If $\lie$ is an irreducible metric Lie algebra, then it has either zero or two orthogonal bi-invariant complex structures. In the latter case, the complex structures are equal up to sign.
\end{Cor}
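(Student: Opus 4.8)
The plan is to deduce the corollary directly from Theorem \ref{thm:main} by showing that for an irreducible metric Lie algebra the number $k$ of irreducible complex factors is forced to equal $1$. First I would dispose of the trivial possibility that $\lie$ admits no orthogonal bi-invariant complex structure at all, in which case the count is zero and there is nothing to prove. So from now on I would assume there is at least one orthogonal bi-invariant complex structure $J : \lie \to \lie$; in particular $\dim_\R \lie$ is even.

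Next I would check that $\lie$ satisfies the hypothesis of Theorem \ref{thm:main}, namely that it has no non-zero abelian factor. Since $\lie$ is irreducible, its only factors are $0$ and $\lie$, so the only way it could have a non-zero abelian factor is if $\lie$ itself were abelian. But an abelian metric Lie algebra of dimension at least $2$ splits orthogonally into one-dimensional factors and is therefore reducible; hence an irreducible abelian Lie algebra is one-dimensional, which is excluded because $\dim_\R \lie$ is even. Thus $\lie$ is non-abelian and has no non-zero abelian factor.

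Then I would invoke the observation recorded just before Theorem \ref{thm:main}: the underlying real Lie algebras of the complex irreducible factors of $(\lie,J)$ are themselves factors of the real metric Lie algebra $\lie$, so irreducibility of $\lie$ forces $(\lie,J)$ to be irreducible as a complex metric Lie algebra, i.e. $k = 1$ and $(\lie,J) = (\lie_1,J_1)$ with $J_1 = J$. Feeding this into Theorem \ref{thm:main} shows that every orthogonal bi-invariant complex structure on $\lie$ has the form $\pm J_1 = \pm J$. Finally I would note that $J \neq -J$, since $J = -J$ would give $J = 0$, contradicting $J^2 = -\I_\lie$; hence there are exactly two such structures, and they agree up to sign.

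The argument is short and presents no serious obstacle once Theorem \ref{thm:main} is available; the only point demanding a little care is the reduction to $k = 1$, that is, correctly transferring irreducibility of the real metric Lie algebra to irreducibility of the associated complex metric Lie algebra while simultaneously ruling out the degenerate abelian, odd-dimensional situation.
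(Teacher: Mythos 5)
Your proof is correct and takes essentially the same route as the paper: dispose of the abelian case (an irreducible abelian metric Lie algebra is one-dimensional, hence has no complex structure), use the observation preceding Theorem \ref{thm:main} that irreducibility of $\lie$ forces $(\lie,J)$ to be irreducible, and then apply Theorem \ref{thm:main} with $k=1$. The only differences are cosmetic: you verify the no-abelian-factor hypothesis in more detail and explicitly note $J \neq -J$, points the paper leaves implicit.
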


\begin{proof}
If $\lie$ is abelian, then this is immediate since an irreducible abelian Lie algebra is isomorphic to $\R$ and hence has no complex structure. So assume that $\lie$ is irreducible and non-abelian with at least one orthogonal bi-invariant complex structure $J: \lie \to \lie$. In this case also $(\lie,J)$ is irreducible as a complex Lie algebra and has no non-zero abelian factor. The result now follows directly from Theorem \ref{thm:main}.
\end{proof}

If we examine Theorem \ref{thm:main}, the same conclusion already follows from the seemingly weaker condition that the Lie algebra $(\lie,J)$ is irreducible as a complex Lie algebra with $J$ any orthogonal bi-invariant complex structure on $\lie$. The next theorem shows that both assumptions are in fact equivalent, leading to an alternative approach for proving Theorem \ref{thm:main}.
\begin{Thm}
	\label{thm:irreducible}
	Let $\lie$ be a real metric Lie algebra with orthogonal decomposition $\lie =\displaystyle\bigoplus_{j=1}^k \lie_j$ into irreducible factors. For every orthogonal bi-invariant complex structure $J: \lie \to \lie$ it holds that $J(\lie_j) = \lie_j$, i.e.~the irreducible factors $\lie_j$ are $J$-invariant. Moreover, the ideals $\left(\lie_j,\restr{J}{\lie_j}\right)$ are the irreducible factors of $(\lie,J)$.
\end{Thm}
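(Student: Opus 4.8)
The plan is to derive the $J$-invariance of each factor from the commutation result Proposition \ref{prop:commutegeneral}, and then to obtain the second statement from the uniqueness in Theorem \ref{thm:decomposition}. For each $j$ let $p_j \colon \lie \to \lie$ be the orthogonal projection onto $\lie_j$ from Example \ref{ex:orthogonalprojection}; by Definition \ref{def:projection} it satisfies $p_j([X,Y]) = [p_j(X),Y]$ and is symmetric. The map $J$ satisfies $J([X,Y]) = [J(X),Y]$ by bi-invariance and is skew-symmetric, since orthogonality together with $J^2 = -\I_\lie$ gives $\langle J(X),Y\rangle = -\langle X, J(Y)\rangle$. Thus $J$ and $p_j$ both meet the hypotheses of Proposition \ref{prop:commutegeneral}, and since $\lie$ has no non-zero abelian factor they commute.

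First I would turn this commutation into invariance. Because $p_j \comp J = J \comp p_j$ and $J(\lie) = \lie$, one computes
\[
J(\lie_j) = J(p_j(\lie)) = p_j(J(\lie)) = p_j(\lie) = \lie_j,
\]
which proves $J(\lie_j) = \lie_j$ for all $j$. Consequently $\restr{J}{\lie_j}$ is a well-defined complex structure on $\lie_j$, each $(\lie_j, \restr{J}{\lie_j})$ is a complex Lie algebra, and $(\lie,J) = \bigoplus_{j=1}^k (\lie_j, \restr{J}{\lie_j})$. This is an orthogonal decomposition for $\langle\cdot,\cdot\rangle_\C$: for $X \in \lie_i$ and $Y \in \lie_j$ with $i \neq j$, both $Y$ and $J(Y)$ lie in $\lie_j$, so $\langle X,Y\rangle = \langle X, J(Y)\rangle = 0$ and hence $\langle X,Y\rangle_\C = 0$. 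To finish, I would show each $(\lie_j, \restr{J}{\lie_j})$ is irreducible as a complex metric Lie algebra: a complex factor $\mathfrak{k}$ of it, together with its complex-orthogonal complement, gives two real ideals that are real-orthogonal (since $\langle X,Y\rangle_\C = 0$ forces $\langle X,Y\rangle = 0$ by Equation~(\ref{eq:realpart})), hence a pair of real factors of $\lie_j$; irreducibility of $\lie_j$ then forces $\mathfrak{k} = 0$ or $\mathfrak{k} = \lie_j$. The same correspondence shows $(\lie,J)$ has no non-zero abelian factor, so Theorem \ref{thm:decomposition} applies to the complex metric Lie algebra $(\lie,J)$ and its irreducible factors are unique. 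The displayed decomposition therefore exhibits exactly those factors.

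The main obstacle is the step from commutation to invariance and the accompanying bookkeeping of the two inner products: one must consistently track whether real or complex orthogonality is being used and confirm that every complex factor of an $\lie_j$ descends to a genuine real factor, which is what allows the real irreducibility to be transported to the complex setting. Once Proposition \ref{prop:commutegeneral} is applied to $J$ and each $p_j$, everything else reduces to the uniqueness statement of Theorem \ref{thm:decomposition}.
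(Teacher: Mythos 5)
Your proposal is correct and takes essentially the same route as the paper: commutation of $J$ with the orthogonal projections $p_j$ via Proposition \ref{prop:commutegeneral} yields $J(\lie_j)=\lie_j$, and the complex irreducibility of each $\left(\lie_j,\restr{J}{\lie_j}\right)$ follows because complex factors descend to real ones. The only difference is that you spell out the bookkeeping (the $\langle\cdot,\cdot\rangle_\C$-orthogonality of the decomposition, the absence of abelian factors of $(\lie,J)$, and the appeal to the uniqueness in Theorem \ref{thm:decomposition}) that the paper's shorter proof leaves implicit.
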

As an immediate consequence it follows that the orthogonal decompositions of the real metric Lie algebra $\lie$ and the complex Lie algebra $(\lie,J)$ are identical. The $k$ in Theorem \ref{thm:main} can hence be computed from the metric Lie algebra $\lie$.
\begin{proof}
	If $p_j: \lie \to \lie$ is the orthogonal projection onto $\lie_j$, then Proposition \ref{prop:commutegeneral} implies that $p_j$ and $J$ commute. In particular, $J$ maps the image $p_j(\lie) = \lie_j$ on itself, leading to the first statement. For the last statement of the corollary, we note that $\left(\lie_j,\restr{J}{\lie_j}\right)$ is irreducible since $\lie_j$ is irreducible and hence these are equal to the irreducible factors of $(\lie,J)$.
\end{proof}

This gives an alternative approach to Theorem \ref{thm:main} by first proving it for an irreducible Lie algebra and then using the theorem above to extend it to general metric Lie algebras.

\section{Varying the inner product on the Lie algebra}
\label{sec:app}
Although Theorem \ref{thm:main} gives a clear picture of the possibilities for orthogonal bi-invariant complex structures on metric Lie algebras, it does not tell us what happens if we change the inner product on $\lie$. As we will show in this section, the number of irreducible components for different metrics can take any value between $1$ and some upper bound determined by the algebraic structure. We also study how this influences the number of orthogonal bi-invariant complex structures. A full answer to this problem remains open, since we do not know whether the constructed metrics have at least one orthogonal bi-invariant complex structure, which is needed to apply Theorem \ref{thm:main}.


First we discuss decompositions of Lie algebras without inner product and compare it to the factors of metric Lie algebras. Similarly as for metric Lie algebras, we call a Lie algebra without inner product indecomposable if is has no non-trivial decomposition.
\begin{Def}
	A Lie algebra $\lie$ is called \textbf{indecomposable} if it cannot be written as a direct sum of ideals $\lie = \lie_1 \oplus \lie_2$ with $\lie_1 \neq 0 \neq \lie_2$. 
\end{Def}

\noindent It is immediate that an indecomposable Lie algebra is irreducible for every inner product and vice versa, if a Lie algebra is irreducible for every inner product it must be indecomposable. 

In contrast, given a metric Lie algebra which is irreducible, it does not hold that the Lie algebra without inner product must be indecomposable. In this section we will not only present concrete examples of such metric Lie algebras, but also show that on every Lie algebra without abelian factors there exists an inner product making it irreducible, showing this is a common phenomenon.

\begin{Thm}
	\label{thm:indecomposable}
Let $\lie$ be a real or complex Lie algebra without no non-zero abelian factors, then there exists an inner product $\langle \cdot, \cdot \rangle$ on $\lie$ such that the corresponding metric Lie algebra $(\lie,\langle \cdot, \cdot \rangle)$ is irreducible. 
\end{Thm}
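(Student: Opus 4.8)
The plan is to construct an explicit inner product by starting with the algebraic decomposition of $\lie$ into indecomposable factors and then ``gluing'' them together metrically so that no non-trivial factor survives. First I would invoke the decomposition of $\lie$ into indecomposable ideals $\lie = \lie_1 \oplus \cdots \oplus \lie_m$ (guaranteed by finite-dimensionality), noting that each $\lie_i$ is non-abelian because $\lie$ has no non-zero abelian factor. The strategy is to exploit Theorem \ref{thm:decomposition}: since the irreducible factors of a metric Lie algebra are \emph{unique ideals}, it suffices to exhibit an inner product for which the \emph{only} ideals that are factors (i.e.\ whose orthogonal complement is also an ideal) are $0$ and $\lie$ itself. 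Equivalently, by Corollary \ref{cor:irreducibleprojection} and the surrounding lemmas, I want to arrange that the only orthogonal projections of $(\lie, \langle\cdot,\cdot\rangle)$ are $0$ and $\I_\lie$.

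The key analytic step is to choose a base inner product making each $\lie_i$ orthonormal in some fixed basis, and then deform it by a ``twist'' that entangles the factors. Concretely, for each $i$ pick a non-zero element $Z_i$ in $[\lie_i,\lie_i]$ (non-zero since $\lie_i$ is non-abelian, using Lemma \ref{lem:noabelian} to ensure the commutator is non-trivial), and define a new inner product in which vectors from different indecomposable factors acquire a small off-diagonal coupling, for instance by setting $\langle X, Y\rangle_\epsilon = \langle X, Y\rangle_0 + \epsilon\,B(X,Y)$ where $B$ is a carefully chosen symmetric bilinear form mixing the factors and $\epsilon$ is small enough that $\langle\cdot,\cdot\rangle_\epsilon$ remains positive definite. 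The point of the coupling is that any candidate factor $\lieh$ must be an ideal of $\lie$, hence (by the uniqueness of the algebraic decomposition up to the indecomposable pieces) must be built from the $\lie_i$; the metric coupling is then designed so that $\lieh^\perp$ fails to be an ideal unless $\lieh$ is trivial.

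The main obstacle I expect is controlling which ideals of $\lie$ can possibly be factors, because an indecomposable Lie algebra can still contain many ideals, and the orthogonal complement condition interacts with the Lie bracket in a subtle way. The cleanest route around this is to characterize a factor $\lieh$ via its associated orthogonal projection $p$, which by Corollary \ref{prop:projcommute} commutes with every other orthogonal projection and, crucially, satisfies $p([X,Y]) = [p(X),Y]$; this forces $p$ to be a Lie-algebra endomorphism respecting the bracket very rigidly. I would show that such a $p$ must act as a scalar (i.e.\ $0$ or $1$) on each indecomposable summand $\lie_i$ because $\lie_i$ is indecomposable, and then use the metric coupling $B$ to rule out the ``mixed'' choices where $p$ is $1$ on some summands and $0$ on others: any such $p$ would have to preserve orthogonality across the coupling, which the construction of $B$ prevents. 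Assembling these observations shows the only surviving projections are $0$ and $\I_\lie$, so $(\lie,\langle\cdot,\cdot\rangle_\epsilon)$ is irreducible, completing the argument.
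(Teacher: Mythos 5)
There is a genuine gap, and it sits exactly at the step you flag as the ``main obstacle.'' Your argument reduces everything to factors of the form $\bigoplus_{i \in S} \lie_i$: you claim that any factor $\lieh$ of the metric Lie algebra ``must be built from the $\lie_i$'' by uniqueness of the algebraic decomposition, and correspondingly that any orthogonal projection $p$ acts as $0$ or $1$ on each indecomposable summand $\lie_i$. But the uniqueness of the decomposition into indecomposable ideals holds only \emph{up to isomorphism}, not at the level of ideals (this is Theorem \ref{thm:decompositionwithoutmetric}, and the paper explicitly recalls \cite[Example 2.3.]{dere19-2} showing that isomorphic indecomposable factors can be \emph{different} ideals). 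Concretely, in $\lie = \lieh_3(\R) \oplus \lieh_3(\R)$ with bases $X_i, Y_i, Z_i$ and $[X_i,Y_i]=Z_i$, the subspace $\lieh = \langle X_1 + Z_2,\, Y_1,\, Z_1 \rangle$ is an indecomposable ideal with $\lie = \lieh \oplus \lie_2$, yet $\lieh$ is not a sum of the $\lie_i$; for an inner product making $\lieh \perp \lie_2$, the orthogonal projection onto $\lieh$ sends $X_1 \mapsto X_1 + Z_2 \notin \lie_1$, so it does not even preserve $\lie_1$, let alone act on it as a scalar. Hence your scheme of ``ruling out mixed choices'' of $p$ only excludes factors of the special form $\bigoplus_{i\in S}\lie_i$ and says nothing about these exotic factors, which is precisely where the difficulty lies; nothing in the proposal explains what property of $B$ would exclude them.

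The repair is the paper's key observation, which your proposal circles around (you pick $Z_i \in [\lie_i,\lie_i]$) but never exploits: while the ideals $\lie_i$ are not canonical, their commutator subalgebras $[\lie_i,\lie_i]$ \emph{are} identical for every decomposition into indecomposables (Theorem \ref{thm:decompositionwithoutmetric}). The paper therefore chooses the inner product so that its restriction to $[\lie,\lie] = \bigoplus_j [\lie_j,\lie_j]$ makes no non-trivial partition of these canonical pieces orthogonal (Lemma \ref{lem:innerproduct}, by a dimension count on the manifold of inner products); the values of the inner product off $[\lie,\lie]$ are irrelevant. Then for any orthogonal decomposition $\lie = \lien_1 \oplus \lien_2$ one gets $[\lien_1,\lien_1] \perp [\lien_2,\lien_2]$, and since these commutators are (sums of) the canonical pieces $[\lie_j,\lie_j]$, the choice of inner product forces $[\lien_1,\lien_1]=0$ or $[\lien_2,\lien_2]=0$; absence of abelian factors then gives $\lien_1 = 0$ or $\lien_2 = 0$. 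Note that this argument never analyzes projections at all: orthogonality is pushed down to the commutator subalgebras, where ideal-level uniqueness is available. Your proposal, as written, lacks this descent and so cannot close the case of factors that are not sums of the chosen $\lie_i$.
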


The main ingredient is \cite[Theorem 3.3.]{fgh13-1} which describes the possibilities for decomposing a Lie algebra into indecomposable factors. We present it in a simplified form here, which is sufficient for our purposes.

\begin{Thm}
	\label{thm:decompositionwithoutmetric}
	Let $\lie$ be a Lie algebra with two decompositions $$\lie = \lie_1 \oplus \ldots \oplus \lie_k = \lieh_1 \oplus \ldots \oplus \lieh_l$$ into indecomposable components. The number of factors $k = l$ is the same and, up to renumbering the $\lieh_j$, the factors $\lie_j \approx \lieh_j$ are isomorphic with identical commutator subalgebras $[\lie_j,\lie_j] = [\lieh_j,\lieh_j]$. 
\end{Thm}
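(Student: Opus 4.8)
The plan is to translate the statement into a Krull--Schmidt theorem for the \emph{centroid} of $\lie$, the associative algebra $\Gamma(\lie)$ of all linear maps $\gamma\colon \lie \to \lie$ satisfying $\gamma([X,Y]) = [\gamma(X),Y] = [X,\gamma(Y)]$ for all $X,Y \in \lie$. First I would record the dictionary between decompositions and idempotents. If $\lie = \mathfrak{a} \oplus \mathfrak{b}$ is a decomposition into ideals, then $[\mathfrak{a},\mathfrak{b}] \subset \mathfrak{a} \cap \mathfrak{b} = 0$, so the bracket-preserving projection onto $\mathfrak{a}$ lies in $\Gamma(\lie)$ and is idempotent; conversely, for any idempotent $\gamma \in \Gamma(\lie)$ both $\mathrm{im}(\gamma)$ and $\ker(\gamma)$ are ideals with vanishing mutual bracket, by the same bracket computation used for orthogonal projections in Section~\ref{sec:decomp}. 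Thus a decomposition $\lie = \bigoplus_i \lie_i$ into ideals corresponds to a complete system $\{e_i\}$ of orthogonal idempotents of $\Gamma(\lie)$, with $\lie_i$ indecomposable exactly when $e_i$ is primitive.

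Next I would establish the Lie-algebraic Fitting lemma that drives the whole argument: for an indecomposable finite-dimensional $\lie$, every $\gamma \in \Gamma(\lie)$ is nilpotent or invertible. Applying the vector-space Fitting decomposition to $\gamma$ gives $\lie = \ker(\gamma^n) \oplus \mathrm{im}(\gamma^n)$ for $n \gg 0$; since $\gamma^n \in \Gamma(\lie)$, both summands are ideals with zero mutual bracket, and indecomposability forces one of them to vanish, so $\gamma^n = 0$ or $\gamma$ is bijective (with inverse again in $\Gamma(\lie)$). A finite-dimensional associative algebra in which every element is nilpotent or a unit is local, so the centroid $\Gamma(\lie_i)$ of each indecomposable factor is a local ring.

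With local centroids available I would run the exchange argument. Write $p_i$ and $q_j$ for the idempotents of the two decompositions, fix $\lie_i$, and note that the composites $\phi_j := p_i \comp q_j|_{\lie_i}$ lie in $\Gamma(\lie_i)$ and sum to $\I_{\lie_i}$; locality of $\Gamma(\lie_i)$ makes some $\phi_{\sigma(i)}$ a unit. The crucial point is that a projection onto a direct-summand ideal is not merely a centroid element but a genuine Lie homomorphism, so $\alpha := q_{\sigma(i)}|_{\lie_i}\colon \lie_i \to \lieh_{\sigma(i)}$ and $\beta := p_i|_{\lieh_{\sigma(i)}}\colon \lieh_{\sigma(i)} \to \lie_i$ are Lie homomorphisms with $\beta \comp \alpha = \phi_{\sigma(i)}$ invertible. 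The symmetric composite $\alpha \comp \beta \in \Gamma(\lieh_{\sigma(i)})$ cannot be nilpotent, since then $\beta \comp \alpha$ would be too, hence it is a unit; combining, $\alpha$ is a Lie isomorphism $\lie_i \cong \lieh_{\sigma(i)}$. The standard replacement step, exchanging $\lie_i$ for $\lieh_{\sigma(i)}$ in the first decomposition and inducting, promotes $\sigma$ to a bijection and yields $k = l$.

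It then remains to prove the literal equality $[\lie_j,\lie_j] = [\lieh_{\sigma(j)},\lieh_{\sigma(j)}]$, which I expect to be the main obstacle. The reductions are clean: because $[\lie_j,\lie_{j'}] = 0$ one has $[\lie_j,\lie_j] = [\lie_j,\lie] = e_j([\lie,\lie])$, and likewise $[\lieh_{\sigma(j)},\lieh_{\sigma(j)}] = f_{\sigma(j)}([\lie,\lie])$, so everything reduces to showing that $e_j$ and $f_{\sigma(j)}$ have the same image on $[\lie,\lie]$. Two centroid elements already agree on $[\lie,\lie]$ once their difference maps $\lie$ into $Z(\lie)$, since $\gamma[X,Y] = [\gamma(X),Y]$. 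The heart of the matter is therefore to show that the discrepancy between the two decompositions is purely central, i.e.\ that the unit conjugating $\{e_j\}$ into $\{f_{\sigma(j)}\}$ differs from the identity by a map into $Z(\lie)$. This is exactly where the non-uniqueness of decompositions illustrated in \cite[Example 2.3.]{dere19-2} is located, and pinning it down modulo the center is the delicate step of the proof.
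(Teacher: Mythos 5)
The first thing to note is that the paper contains no proof of this statement to compare against: Theorem \ref{thm:decompositionwithoutmetric} is quoted, in simplified form, from \cite[Theorem 3.3.]{fgh13-1}, so your attempt has to stand on its own. Its Krull--Schmidt half does: the dictionary between decompositions into ideals and idempotents of the centroid is correct (both directions rest on $[\mathfrak{a},\mathfrak{b}] \subset \mathfrak{a} \cap \mathfrak{b} = 0$), the Fitting argument does make $\Gamma(\lie_i)$ local for indecomposable $\lie_i$, and since the $\phi_j$ sum to the unit $\I_{\lie_i}$ while the non-units of a local ring form an ideal, some $\phi_{\sigma(i)} = \beta \comp \alpha$ is invertible; your two-sided invertibility argument then gives $\lie_i \cong \lieh_{\sigma(i)}$, and the exchange-and-induct step yields $k=l$. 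The genuine gap is the remaining claim, the literal equality $[\lie_j,\lie_j] = [\lieh_{\sigma(j)},\lieh_{\sigma(j)}]$: you correctly reduce it to $e_j([\lie,\lie]) = f_{\sigma(j)}([\lie,\lie])$ and then to the assertion that the discrepancy between the two idempotent systems is central, but you give no proof of that assertion --- you explicitly flag it as ``the delicate step''. Since this equality of commutator ideals is precisely the part of the theorem the paper relies on later (the proof of Theorem \ref{thm:indecomposable} needs the decomposition $[\lie,\lie] = \bigoplus_j [\lie_j,\lie_j]$ to be independent of the chosen indecomposable decomposition), an argument that establishes only the isomorphism statement has not proved the theorem.

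The missing step can, however, be closed using the exchange you already performed, with no conjugating-unit lemma at all. The exchange gives $\lie = \lieh_{\sigma(i)} \oplus \mathfrak{m}$ with $\mathfrak{m} = \bigoplus_{j \neq i} \lie_j$, so $e_i$ and the projection $g$ onto $\lieh_{\sigma(i)}$ along $\mathfrak{m}$ are two idempotents of $\Gamma(\lie)$ with the \emph{same} kernel $\mathfrak{m}$. For $X, Y \in \lie_i$ one has $X - g(X) \in \mathfrak{m}$ and $[\mathfrak{m},\lie_i] \subset \mathfrak{m} \cap \lie_i = 0$, hence $[X,Y] = [g(X),Y] = g([X,Y]) = [g(X),g(Y)] \in [\lieh_{\sigma(i)},\lieh_{\sigma(i)}]$; the symmetric computation with $e_i$ applied to $U,V \in \lieh_{\sigma(i)}$, using $U - e_i(U) \in \mathfrak{m}$ and $[\mathfrak{m},\lieh_{\sigma(i)}] = 0$, gives the reverse inclusion, so $[\lie_i,\lie_i] = [\lieh_{\sigma(i)},\lieh_{\sigma(i)}]$. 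The same one-line computation shows $[\lieh_j,\lieh_j] \subset \mathfrak{m}$ for $j \neq \sigma(i)$ and that projecting the remaining $\lieh_j$ into $\mathfrak{m}$ along $\lieh_{\sigma(i)}$ leaves their commutator subalgebras literally unchanged, so the induction on $\mathfrak{m}$ carries the commutator equality along. As a side remark, your central-discrepancy intuition is in fact correct --- the computation above yields $(e_i - g)([\lie,\lie]) = 0$ and hence $(e_i - g)(\lie) \subset Z(\lie)$ --- but in this form it is a \emph{consequence} of the exchange step, not an independent lemma one could hope to establish first.
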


The main consequence of this theorem is not only that the indecomposable factors are unique up to isomorphism, but also that their commutator subalgebras form the same subalgebras of $\lie$. In particular, given the Lie algebra $\lie = \displaystyle \bigoplus_{j=1}^k \lie_j$ with the $\lie_j$ indecomposable, the commutator subalgebra $[\lie,\lie] $ has a unique decomposition $[\lie,\lie] = \displaystyle \bigoplus_{j=1}^k [\lie_j,\lie_j]$ into ideals, which do not depend on the choice of the indecomposable $\lie_j$. Note that the ideals $[\lie_j,\lie_j] \subset \lie$ could be decomposable, for example in the case where $\lie$ is $2$-step nilpotent and thus $[\lie,\lie]$ is abelian. If we restrict ourselves to Lie algebras where none of the $\lie_j$ is abelian, then every factor $\lie_j$ corresponds to a unique factor $[\lie_j,\lie_j] \neq 0$ of the decomposition of $[\lie,\lie]$. 

We start by an easy lemma about the existence of inner products on vector spaces.

\begin{Lem}
\label{lem:innerproduct}
Let $V = \displaystyle \bigoplus_{j\in I} V_j$ be a real or complex vector space given as a direct sum of subspaces $0 \neq V_j \subset V$. There exists an inner product $\langle \cdot,\cdot \rangle$ such that for every disjoint union $I = I_1 \cup I_2$ with $I_1 \neq I \neq I_2$, it holds that the subspaces $\displaystyle\bigoplus_{j \in I_1} V_j$ and $\displaystyle\bigoplus_{j\in I_2} V_j$ are not orthogonal.
\end{Lem}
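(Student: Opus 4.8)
The plan is to build the required inner product as a single rank-one perturbation of a block-diagonal one, coupling all the summands $V_j$ to each other simultaneously. First I would fix an auxiliary inner product $\langle \cdot, \cdot \rangle_0$ on $V$ for which the subspaces $V_j$ are mutually orthogonal; such a form exists, for instance by choosing any inner product on each $V_j$ separately and declaring distinct $V_j$ to be orthogonal. Since each $V_j$ is non-zero and $I$ is finite, I can then pick a vector $w_j \in V_j$ with $\langle w_j, w_j \rangle_0 = 1$ for every $j \in I$, and set $w = \sum_{j \in I} w_j$. The inner product I propose is
$$\langle x, y \rangle = \langle x, y \rangle_0 + \langle x, w \rangle_0 \, \overline{\langle y, w \rangle_0},$$
where in the real case the complex conjugation is simply omitted.

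Next I would verify that this formula defines a genuine inner product. Hermitian symmetry (symmetry in the real case) is immediate, since conjugating $\langle x, w \rangle_0 \, \overline{\langle y, w \rangle_0}$ swaps the two factors. Positive definiteness is automatic and needs no smallness assumption, because the added term is a rank-one Hermitian square: for $x \neq 0$ one has
$$\langle x, x \rangle = \langle x, x \rangle_0 + \left| \langle x, w \rangle_0 \right|^2 \geq \langle x, x \rangle_0 > 0.$$
The conjugation placed on the second factor is exactly what is needed to match the convention that the form is linear in the first and antilinear in the second argument.

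The key point is that this perturbation couples every pair of summands. Because the $V_j$ are $\langle \cdot, \cdot \rangle_0$-orthogonal with $\langle w_j, w_j \rangle_0 = 1$, additivity in the second argument gives $\langle w_j, w \rangle_0 = \sum_{i \in I} \langle w_j, w_i \rangle_0 = 1$ for each $j$, and hence for $j \neq l$
$$\langle w_j, w_l \rangle = \langle w_j, w_l \rangle_0 + \langle w_j, w \rangle_0 \, \overline{\langle w_l, w \rangle_0} = 0 + 1 \cdot 1 = 1 \neq 0.$$
Given any disjoint decomposition $I = I_1 \cup I_2$ with $I_1 \neq I \neq I_2$, both parts are non-empty, so I may pick $j \in I_1$ and $l \in I_2$. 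Then $w_j \in \bigoplus_{i \in I_1} V_i$ and $w_l \in \bigoplus_{i \in I_2} V_i$ satisfy $\langle w_j, w_l \rangle = 1 \neq 0$, so the two subspaces are not orthogonal, which is precisely the claim.

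The construction is essentially forced once one sees that it suffices to make all the distinguished vectors $w_j$ pairwise non-orthogonal, turning the coupling pattern on $I$ into the complete graph so that every partition is automatically crossed. The only genuine point requiring attention is reconciling off-diagonal coupling with positive definiteness; this is what motivates using a single positive semidefinite rank-one term rather than an arbitrary symmetric perturbation, and the same uniform formula then settles the real and complex cases at once.
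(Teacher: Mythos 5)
Your proof is correct, but it takes a genuinely different route from the paper. The paper argues non-constructively by dimension counting: the inner products making a fixed non-trivial splitting $\bigoplus_{j \in I_1} V_j \perp \bigoplus_{j \in I_2} V_j$ orthogonal form a submanifold of strictly smaller dimension inside the manifold of all inner products (of dimension $\frac{n(n+1)}{2}$ in the real case, $n^2$ in the complex case), and since there are only finitely many partitions $I = I_1 \cup I_2$, a manifold cannot be covered by finitely many lower-dimensional submanifolds; hence a generic inner product works. You instead exhibit an explicit example: a rank-one Hermitian perturbation $\langle x, y \rangle = \langle x, y \rangle_0 + \langle x, w \rangle_0 \, \overline{\langle y, w \rangle_0}$ of a block-orthogonal form, with $w = \sum_{j \in I} w_j$ a sum of unit vectors, one from each summand. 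Your verification is complete: Hermitian symmetry and positive definiteness are immediate since the perturbation is a positive semidefinite square, and the computation $\langle w_j, w_l \rangle = 1$ for $j \neq l$ couples every pair of summands, so every non-trivial partition is crossed (finiteness of $I$, needed to form $w$, is guaranteed by the paper's standing finite-dimensionality assumption). Interestingly, the paper explicitly remarks that giving such an explicit form is ``an easy exercise'' but prefers the existence argument; the trade-off is that the paper's genericity argument shows the bad inner products are scarce --- a fact it exploits later when observing that inner products failing to make a Lie algebra irreducible are rare --- whereas your construction is more elementary, avoids any appeal to manifolds or covering arguments, and produces a concrete witness.
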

\begin{proof}
Note that the space of inner products on a vector space of dimension $n$ is a manifold of either dimension $\frac{n(n+1)}{2}$ in the real case or dimension $n^2$ in the complex case. In particular, the space of inner products on $V = W_1 \oplus W_2$ with $W_1 \neq 0 \neq W_2$ making $W_1$ and $W_2$ orthogonal has dimension strictly smaller than the space all inner products on $V$. The lemma follows by applying this observation to all possibilities for $I = I_1 \cup I_2$ and the fact that a manifold cannot be covered by finitely many submanifolds of strictly lower dimension.
\end{proof}

\noindent It is an easy exercise to give an explicit form for an inner product as in Lemma \ref{lem:innerproduct}, but since we only need the existence, we stated it as above.

\begin{proof}[Proof of Theorem \ref{thm:indecomposable}]
	Take any decomposition $$\lie = \lie_1 \oplus \ldots \oplus \lie_k$$ into indecomposable factors. By Theorem \ref{thm:decompositionwithoutmetric} and the fact that $\lie$ has no non-zero abelian factors, we get a decomposition $[\lie,\lie] = [\lie_1,\lie_1] \oplus \ldots \oplus [\lie_k,\lie_k]$ with $[\lie_j,\lie_j] \neq 0$ for every $1 \leq j \leq k$. Now take any inner product $\langle \cdot, \cdot \rangle$ on $\lie$ such that the restriction on $[\lie,\lie]$ satisfies the conclusion of Lemma \ref{lem:innerproduct} for the decomposition $[\lie,\lie] = \displaystyle \bigoplus_{j=1}^k [\lie_j,\lie_j]$.  
	We claim that $\lie$ is irreducible with this inner product. 
	
	Indeed, assume that this is not the case and that $\lie = \lien_1 \oplus \lien_2$ has an orthogonal decomposition. By taking decompositions $\lien_1 = \lieh_{1} \oplus \ldots \oplus \lieh_{l}$ and $\lien_2 = \lieh_{l+1} \oplus \ldots \oplus \lieh_k$ into indecomposable factors, we get also a decomposition $$\lie = \lien_1 \oplus \lien_2 = \lieh_1 \oplus \ldots \oplus \lieh_k$$ of $\lie$ into indecomposable ideals. Since $\lien_1$ is orthogonal to $\lien_2$, also $[\lien_1,\lien_1]$ is orthogonal to $[\lien_2,\lien_2]$. Because $$[\lie,\lie] = [\lien_1,\lien_1] \oplus [\lien_2,\lien_2] = \left(\bigoplus_{j=1}^l [\lieh_j,\lieh_j] \right)\oplus \left( \bigoplus_{i=l+1}^k[\lieh_j,\lieh_j] \right)$$ and the assumption on the inner product, we get that either $[\lien_1,\lien_1] = 0$ or $[\lien_2,\lien_2] = 0$ and thus also either $\lien_1 = 0$ or $\lien_2 = 0$.
\end{proof}

The proof even shows that the inner products for which $\lie$ is not irreducible are rather scarce. We give a concrete example of an irreducible metric Lie algebra which does not come from an indecomposable Lie algebra.
\begin{Ex}
	\label{ex:irreduciblemetric}
	Take the Lie algebra $\lie = \lieh_3(\R) \oplus \lieh_3(\R)$ with basis $X_1, X_2, Y_1, Y_2, Z_1, Z_2$ and relations $[X_j,Y_j]=Z_j$ for $j \in \{1,2\}$. Now consider the inner product with orthonormal basis $X_1, \hspace{1mm} X_2, \hspace{1mm} Y_1, \hspace{1mm} Y_2, \hspace{1mm} Z_1$ and $Z_1 -Z_2$. By Theorem \ref{thm:decompositionwithoutmetric} any decomposition $\lie = \lie_1 \oplus \lie_2$ with $\lie_1 \neq 0 \neq \lie_2$ has the property that $[\lie_j,\lie_j]$ is spanned by either $Z_1$ or $Z_2$. Since $\langle Z_1, Z_2 \rangle = 1 \neq 0$, we conclude that $\lie$ is irreducible with this metric, although it is not indecomposable.
\end{Ex}

As a consequence, we show that on a Lie algebra given by $k$ indecomposable ideals, the number of irreducible factors can take any number between $1$ and $k$ by varying the inner product.
\begin{Cor}
	\label{cor:numberirr}
Let $\lie$ be a Lie algebra with a decomposition $$\lie = \bigoplus_{j=1}^k \lie_j$$ consisting of $k$ indecomposable ideals $\lie_j$, such that $[\lie_j,\lie_j] \neq 0$. For every $1 \leq l \leq k$ there exists an inner product $\langle \cdot, \cdot \rangle_l$ on $\lie$ such that it has exactly $l$ irreducible components. 	
\end{Cor}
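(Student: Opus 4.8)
The plan is to build an inner product with exactly $l$ irreducible factors by grouping the $k$ indecomposable ideals into $l$ clusters, making one cluster (of the appropriate size) irreducible by the mechanism of Theorem~\ref{thm:indecomposable} while keeping the remaining clusters as singletons that stay orthogonal to everything else. Concretely, I would partition the index set $\{1,\ldots,k\}$ as $I_1 \cup \{l+1\} \cup \cdots \cup \{k\}$, where $I_1 = \{1,\ldots,l\}$ is one block of size $k-l+1$ and each of the remaining $k-l$ indices forms its own block. I then set $\liem_1 = \bigoplus_{j\in I_1}\lie_j$, and for $l+1 \le i \le k$ set $\liem_i = \lie_i$, so that $\lie = \liem_1 \oplus \liem_{l+1} \oplus \cdots \oplus \liem_k$ is a decomposition into $l$ ideals with $[\liem_i,\liem_i]\neq 0$ for every block.

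Next I would construct the inner product $\langle\cdot,\cdot\rangle_l$ blockwise. I would declare the blocks $\liem_1,\liem_{l+1},\ldots,\liem_k$ mutually orthogonal, and on the block $\liem_1$ I would choose an inner product satisfying the conclusion of Lemma~\ref{lem:innerproduct} applied to the decomposition $[\liem_1,\liem_1] = \bigoplus_{j\in I_1}[\lie_j,\lie_j]$ (using that $[\lie_j,\lie_j]\neq 0$); on each singleton block I put any inner product. This is exactly the construction in the proof of Theorem~\ref{thm:indecomposable}, applied to the single indecomposable-group $\liem_1$. By that theorem's argument, $\liem_1$ becomes irreducible: any orthogonal splitting of $\liem_1$ would refine to indecomposable factors whose commutator subalgebras are among the $[\lie_j,\lie_j]$, $j\in I_1$, and the non-orthogonality guaranteed by Lemma~\ref{lem:innerproduct} forbids such a splitting. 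Meanwhile each singleton $\liem_i = \lie_i$ ($i>l$) is indecomposable, hence irreducible for any inner product. Thus $\lie = \liem_1 \oplus \liem_{l+1} \oplus \cdots \oplus \liem_k$ is an orthogonal decomposition into $l$ irreducible factors.

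Finally I would invoke the uniqueness from Theorem~\ref{thm:decomposition}: since $\lie$ has no non-zero abelian factor (each $[\lie_j,\lie_j]\neq 0$ forces $Z(\lie)\subset[\lie,\lie]$ via Lemma~\ref{lem:noabelian}), the irreducible factors of the orthogonal decomposition are the unique ideals $\liem_1,\liem_{l+1},\ldots,\liem_k$, so their number is exactly $l$. The step I expect to require the most care is verifying that $\liem_1$ is genuinely irreducible and not merely indecomposable-looking: I must check that an arbitrary orthogonal decomposition $\liem_1 = \lien_1\oplus\lien_2$ (not just one respecting the given $\lie_j$) leads to a contradiction. This is handled precisely by the argument in the proof of Theorem~\ref{thm:indecomposable}, where one refines $\lien_1,\lien_2$ into indecomposables, uses Theorem~\ref{thm:decompositionwithoutmetric} to match their commutator subalgebras with the $[\lie_j,\lie_j]$, and then contradicts the non-orthogonality supplied by Lemma~\ref{lem:innerproduct}. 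The only real subtlety beyond that citation is confirming that the no-abelian-factor hypothesis propagates to $\liem_1$, which again follows from $[\lie_j,\lie_j]\neq 0$ for all $j\in I_1$.
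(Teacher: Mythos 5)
Your proposal follows essentially the same route as the paper: the paper applies Theorem \ref{thm:indecomposable} to the block $\bigoplus_{j=l}^{k}\lie_j$ and then extends orthogonally by arbitrary inner products on the remaining factors $\lie_1,\ldots,\lie_{l-1}$, which is exactly your clustering construction. One correction, though: your partition is internally inconsistent. You take $I_1=\{1,\ldots,l\}$ (which has size $l$, not $k-l+1$ as you state) together with the $k-l$ singletons $\{l+1\},\ldots,\{k\}$, so your decomposition has $k-l+1$ blocks, not $l$; to get exactly $l$ irreducible components the big block must have size $k-l+1$ (e.g.\ $I_1=\{l,\ldots,k\}$, as in the paper) with $l-1$ singletons left over. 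Since $k-l+1$ ranges over $\{1,\ldots,k\}$ as $l$ does, your construction still yields the statement after renaming the parameter, so this is an indexing slip rather than a mathematical gap. Two smaller remarks: your explicit appeal to the uniqueness of Theorem \ref{thm:decomposition} to justify the word ``exactly'' is a good addition that the paper leaves implicit; on the other hand, the claim that $[\lie_j,\lie_j]\neq 0$ for all $j$ rules out abelian factors is most cleanly obtained from Theorem \ref{thm:decompositionwithoutmetric} (any abelian factor would refine to a one-dimensional indecomposable factor, contradicting the isomorphism matching), not directly from Lemma \ref{lem:noabelian}, which only translates the no-abelian-factor condition into the condition $Z(\lie)\subset[\lie,\lie]$.
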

\noindent Clearly, the theorem fails when admitting abelian factors.
\begin{proof}
Apply Theorem \ref{thm:indecomposable} on the Lie algebra $\displaystyle \bigoplus_{j=l}^{k} \lie_j$ to find an inner product which makes it irreducible. Take any inner product on the Lie algebras $\lie_1, \ldots, \lie_{l-1}$, then by extending it orthogonally to $\lie$ we get the result. 
\end{proof}

We have shown in Corollary \ref{cor:finitenumber} that the number of orthogonal bi-invariant complex structures is either $0$ or $2^k$ with $k$ the number of irreducible components. By Corollary \ref{cor:numberirr} the number of irreducible components varies with the inner product, leading to the following theorem.

\begin{Thm}
	\label{Q}
Let $\lie = \displaystyle \bigoplus_{j=1}^k \lie_j$ be a complex Lie algebra given as a direct sum of indecomposable ideals $\lie_j$ with $[\lie_j,\lie_j] \neq 0$. For every $1 \leq l \leq k$ there exists an inner product on the underlying Lie algebra of $\lie$ such that it has exactly $2^l$ orthogonal bi-invariant complex structures.
\end{Thm}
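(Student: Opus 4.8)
The plan is to translate the freedom in the number of irreducible components, which Corollary \ref{cor:numberirr} already provides for the \emph{complex} metric Lie algebra, into a statement about the underlying real Lie algebra equipped with its tautological complex structure $J(x)=ix$, and then to read off the number of orthogonal bi-invariant complex structures from the remark following Corollary \ref{cor:finitenumber}. The guiding principle is that the count $2^l$ depends only on the number $l$ of irreducible factors of the associated complex metric Lie algebra, so the whole problem reduces to arranging that number to equal any prescribed value between $1$ and $k$.

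First I would apply Corollary \ref{cor:numberirr} directly to the complex Lie algebra $\lie=\bigoplus_{j=1}^k\lie_j$: since the $\lie_j$ are indecomposable ideals with $[\lie_j,\lie_j]\neq 0$, for a fixed $1\leq l\leq k$ this yields a Hermitian inner product $\langle\cdot,\cdot\rangle_\C$ on $\lie$ for which the complex metric Lie algebra $(\lie,\langle\cdot,\cdot\rangle_\C)$ has exactly $l$ irreducible factors. I would then pass to the underlying real Lie algebra $\lie_\R$, defining a real inner product through the real-part formula (\ref{eq:realpart}), namely $\langle X,Y\rangle=\langle X,Y\rangle_\C+\overline{\langle X,Y\rangle_\C}$. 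For this metric the map $J(x)=ix$ is an orthogonal bi-invariant complex structure: bi-invariance is just the $\C$-bilinearity of the bracket, and orthogonality holds because multiplication by $i$ is an isometry of any Hermitian form. The essential bookkeeping step is to verify that the complex inner product induced back from $\langle\cdot,\cdot\rangle$ by the construction on page \pageref{definitionofinneronC} is exactly $\langle\cdot,\cdot\rangle_\C$; this is a brief computation matching real and imaginary parts, after which $(\lie_\R,J)$ with its induced metric coincides with $(\lie,\langle\cdot,\cdot\rangle_\C)$ and hence still has $l$ irreducible factors.

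Finally I would invoke Corollary \ref{cor:finitenumber} together with the remark after it. Before doing so one must check that $\lie_\R$ has no non-zero abelian factor, which by Lemma \ref{lem:noabelian} is the condition $Z(\lie_\R)\subset[\lie_\R,\lie_\R]$; since $Z(\lie_\R)=Z(\lie)$ and $[\lie_\R,\lie_\R]=[\lie,\lie]$ as subsets (using $i[X,Y]=[iX,Y]$), this follows from the corresponding condition for the complex $\lie$, which holds because every indecomposable factor has non-trivial bracket. Having exhibited one orthogonal bi-invariant complex structure $J$ whose associated complex Lie algebra has $l$ irreducible factors, the remark following Corollary \ref{cor:finitenumber} (built on Theorem \ref{thm:main}) gives precisely $2^l$ orthogonal bi-invariant complex structures on $(\lie_\R,\langle\cdot,\cdot\rangle)$. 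I expect the only genuinely delicate point to be the identification of the two metrics, i.e.\ confirming that the number of irreducible factors is truly preserved when moving between the complex-metric picture and the real-metric-plus-$J$ picture; every other ingredient is an assembly of results already established.
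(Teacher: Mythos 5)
Your proposal is correct and follows essentially the same route as the paper's proof: apply Corollary \ref{cor:numberirr} to the complex Lie algebra to get an inner product with exactly $l$ irreducible factors, pass to the underlying real Lie algebra where multiplication by $i$ is an orthogonal bi-invariant complex structure, and conclude via Theorem \ref{thm:main}. The bookkeeping you flag as delicate --- that the complex metric induced back from the real-part metric is exactly $\langle\cdot,\cdot\rangle_\C$ (this is why the paper normalizes with the factor $2$), and that the no-abelian-factor hypothesis transfers to the real picture --- checks out and is precisely what the paper's terse proof leaves implicit.
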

\begin{proof}
Take any inner product $\langle \cdot,\cdot\rangle_l$ on $\lie$ such that it has exactly $l$ irreducible factors, which exists by Corollary \ref{cor:numberirr}. Since the underlying Lie algebra of $\lie$ has at least one orthogonal bi-invariant complex structure, given by multiplication by $i$, it has exactly $2^l$ orthogonal bi-invariant complex structures by Theorem \ref{thm:main}.
\end{proof}

The theorem starts from a complex Lie algebra and its number of indecomposable components. Since Theorem \ref{thm:irreducible} only works for metric Lie algebras, it is unclear at the moment how this relates to the decomposition of the underlying real Lie algebra into indecomposable factors. Also, it is not known whether every real Lie algebra has an inner product such that it does not admit any orthogonal bi-invariant complex structure.

\begin{QN}
	\label{Q:varyingmetric}
	Let $\lie = \displaystyle \bigoplus_{j=1}^k \lie_j$ be a real Lie algebra given as a direct sum of indecomposable ideals $\lie_j$ with $[\lie_j,\lie_j] \neq 0$. What are the possible number of orthogonal bi-invariant complex structures on $\lie$ for different inner products on $\lie$? Does there always exist an inner product such that $\lie$ has no orthogonal bi-invariant complex structures?
\end{QN}

If the dimension is even, every inner product on a real abelian Lie algebras admits an orthogonal bi-invariant complex structure. The following low-dimensional example shows that for nilpotent Lie algebras this is not the case.
\begin{Ex}	Consider the underlying real Lie algebra $\lie$ of the complex Heisenberg algebra $\lieh_3(\C)$ with $X, Y$ and $Z$ the standard complex basis for $\lieh_3(\C)$ satisfying $[X,Y] = Z$. As a real Lie algebra with basis $E_1 = X, E_2 = iX, E_3 = Y, E_4 = iY, E_5 = Z$ and $E_6 = iZ$, the structure constants are given by 
	\begin{align*}
	[E_1,E_3] &= E_5 \\
	[E_2,E_4] &= -E_5 \\
	[E_2,E_3] &= E_6 \\
	[E_1,E_4] &= E_6.
	\end{align*}
	Of course there are inner products for which $\lie$ admits two orthogonal bi-invariant complex structures, starting from any Hermitian inner product on $\lieh_3(\C)$. Since $\lie$ is indecomposable, every inner product makes it irreducible, hence $\lie$ can have at most $2$ orthogonal bi-invariant complex structures for any inner product.
	
	On the other hand, if $\langle \cdot, \cdot \rangle$ is an inner product on $\lie$ such that $E_5$ and $E_6$ are not orthogonal, then $\lie$ does not admit an orthogonal bi-invariant complex structure. Indeed, if $J$ is a bi-invariant complex structure on $\lie$, it will commute with multiplication by $i$ on $[\lie,\lie]$ by definition of bi-invariant complex structure, exactly as in the proof of Proposition \ref{prop:commutegeneral}. This implies that there exists $a,b \in \R$ such $J(E_5) = aE_5 + b E_6$ and $J(E_6) = -b E_5 + a E_6$. Since $J^2 = -\I_\lie$, a computation shows that $a=0$ and $b \in \{ \pm 1\}$. Because $E_5$ and $J(E_5) = \pm E_6$ are not orthogonal, we conclude that such a $J$ does not exist.
\end{Ex} 

If the dimension of $[\lie,\lie]$ is greater than $2$, it is not clear how to generalize this example, leaving the last part of Question \ref{Q:varyingmetric} open.

\bibliography{ref}

\begin{thebibliography}{10}

\bibitem{ad18-1}
A.~Andrada and I.~G. Dotti.
\newblock Conformal {K}illing-{Y}ano 2-forms.
\newblock {\em Differential Geom. Appl.}, 58:103--119, 2018.

\bibitem{ad19-1}
Adri\'an Andrada and Isabel~G. Dotti.
\newblock Killing-{Y}ano 2-forms on 2-step nilpotent {L}ie groups.
\newblock {\em Preprint}, 2019.
\newblock arXiv:1907.03662.

\bibitem{dbm19-1}
Viviana del Barco and Andrei Moroianu.
\newblock Killing forms on $2$-step nilmanifolds.
\newblock {\em The Journal of Geometric Analysis}, 2019.

\bibitem{dbm20-2}
Viviana del Barco and Andrei Moroianu.
\newblock Higher degree killing forms on 2-step nilmanifolds.
\newblock {\em Preprint}, 2020.
\newblock arXiv:2002.01208.

\bibitem{dbm20-1}
Viviana del Barco and Andrei Moroianu.
\newblock Symmetric killing tensors on nilmanifolds.
\newblock {\em To appear in Bull.~Soc.~Math.~France}, 2020.

\bibitem{dere19-2}
Jonas Der\'{e}.
\newblock The structure of underlying {L}ie algebras.
\newblock {\em Linear Algebra Appl.}, 581:471--495, 2019.

\bibitem{dlv12-1}
Antonio~J. Di~Scala, Jorge Lauret, and Luigi Vezzoni.
\newblock Quasi-{K}\"ahler {C}hern-flat manifolds and complex 2-step nilpotent
  {L}ie algebras.
\newblock {\em Ann. Sc. Norm. Super. Pisa Cl. Sci. (5)}, 11(1):41--60, 2012.

\bibitem{fgh13-1}
David~J. Fisher, Robert~J. Gray, and Peter~E. Hydon.
\newblock Automorphisms of real {L}ie algebras of dimension five or less.
\newblock {\em J. Phys. A}, 46(22):225204, 18, 2013.

\bibitem{huyb05-1}
Daniel Huybrechts.
\newblock {\em Complex geometry}.
\newblock Universitext. Springer-Verlag, Berlin, 2005.
\newblock An introduction.

\bibitem{miln76-1}
John Milnor.
\newblock Curvatures of left invariant metrics on {L}ie groups.
\newblock {\em Advances in Math.}, 21(3):293--329, 1976.

\bibitem{pw70-1}
Martin Walker and Roger Penrose.
\newblock On quadratic first integrals of the geodesic equations for type
  {$\{22\}$} spacetimes.
\newblock {\em Comm. Math. Phys.}, 18:265--274, 1970.

\bibitem{yano59-1}
Kentaro Yano.
\newblock Harmonic and {K}illing vector fields in compact orientable
  {R}iemannian spaces with boundary.
\newblock {\em Ann. of Math. (2)}, 69:588--597, 1959.

\end{thebibliography}
\bibliographystyle{plain}

\end{document}